\newtheorem{theorem}{Theorem}[section]
\newtheorem{lemma}[theorem]{Lemma}
\newtheorem{proposition}[theorem]{Proposition}
\newtheorem{corollary}[theorem]{Corollary}
\begin{document}
\begin{abstract}
In this paper,  a method is given to calculate the Jones polynomial of the 6-plat presentations of knots by using  a representation of the braid group $\mathbb{B}_6$ into a group of $5\times 5$ matrices.   We also can  calculate the Jones polynomial of the $2n$-plat presentations of knots by generalizing the method for the $6$-plat presentations of knots. 

\end{abstract}

\title{On the Jones Polynomial of  $2n$-plat presentations of knots}
\author{ Bo-hyun Kwon}
\date{Thursday, September 12, 2013}
\maketitle
\footnote{The subject classification code: 57M27}
\section{Introduction }

In  1985, Jones [8] discovered the polynomial knot invariant $V_K(t)$ and gave a formula to calculate the polynomials of knots that are presented as closed braids. He also gave a   formula to calculate the Jones polynomials of knots that are presented as closed plats. The closed plat formula is  described in [3]. Birman and Kanenobu [3] generalized the formula to the polynomials of knots which are obtained by a combination of closed braid and plat. In the case of $2n$-plat, by using the skein relation of Jones polynomial, we have $2^n$ closed braids that is related to the given $2n$-plat. Then the Jones polynomial of the $2n$-plat can be obtained from the Jones polynomials of the $2^n$ closed braids.\\

Kauffman [9] introduced the Kauffman bracket $<K>$ and the writhe $w(K)$ to calculate the Kauffman polynomial $X_K(a)$, which is identical to the Jones polynomial $V_K(t)$ with the change of variable $t=a^4$.\\

In this paper, by using the skein relation of the Kauffman bracket, we present a method  to calculate the Kauffman bracket and the writhe of 6-plat presentaions of knots that is obtained directly from the $6$-plat presentation. Also, we indicate how it extends to $2n$-plat presentations of knots.\\

Let $S^2$ be a sphere smoothly embedded in $S^3$ and let $K$ be a knot transverse to $S^2$. The complement in $S^3$ of $S^2$ consists of two open balls, $B_1$ and $B_2$. We assume that $S^2$ is the $xz$-plane $\cup~\{\infty\}$. Let $p$ be the projection onto the $xy$-plane  from $\mathbb{R}^3$.
Then the projection of $S^2-\{\infty\}$ onto the $xy$-plane  is the $x$-axis, and $B_1$ projects to the upper half plane.  Similarly, $B_2$ projects to the lower half plane.
The resulting diagram of  $K$  is called a plat on $2n$-strings, denoted by $p_{2n}(w)$,  if it is the union of a $2n$-braid $w$ and $2n$ unlinked and unknotted arcs which connect pairs of consecutive strings of the braid at the top and at the bottom endpoints and $S^2$ meets the top of the $2n$-braid. The bridge (plat) number $b(K)$ of $K$ is the smallest possible number $n$ such that there exists a plat presentation  of $K$ on $2n$ strings. We remark that the braid group $\mathbb{B}_{2n}$ is generated by $\sigma_1,\sigma_2,\cdot\cdot\cdot\sigma_{2n-1}$ which are twistings of two adjacent strings. For example, $w=\sigma_2^{-1}\sigma_4^{-1}\sigma_3\sigma_1^3\sigma_5^2\sigma_4^{-1}\sigma_2^{-1}$ is the word for the
6 braid in the dotted rectangle of the first diagram of Figure 1.\\
\begin{figure}[htb]
\includegraphics[scale=.30]{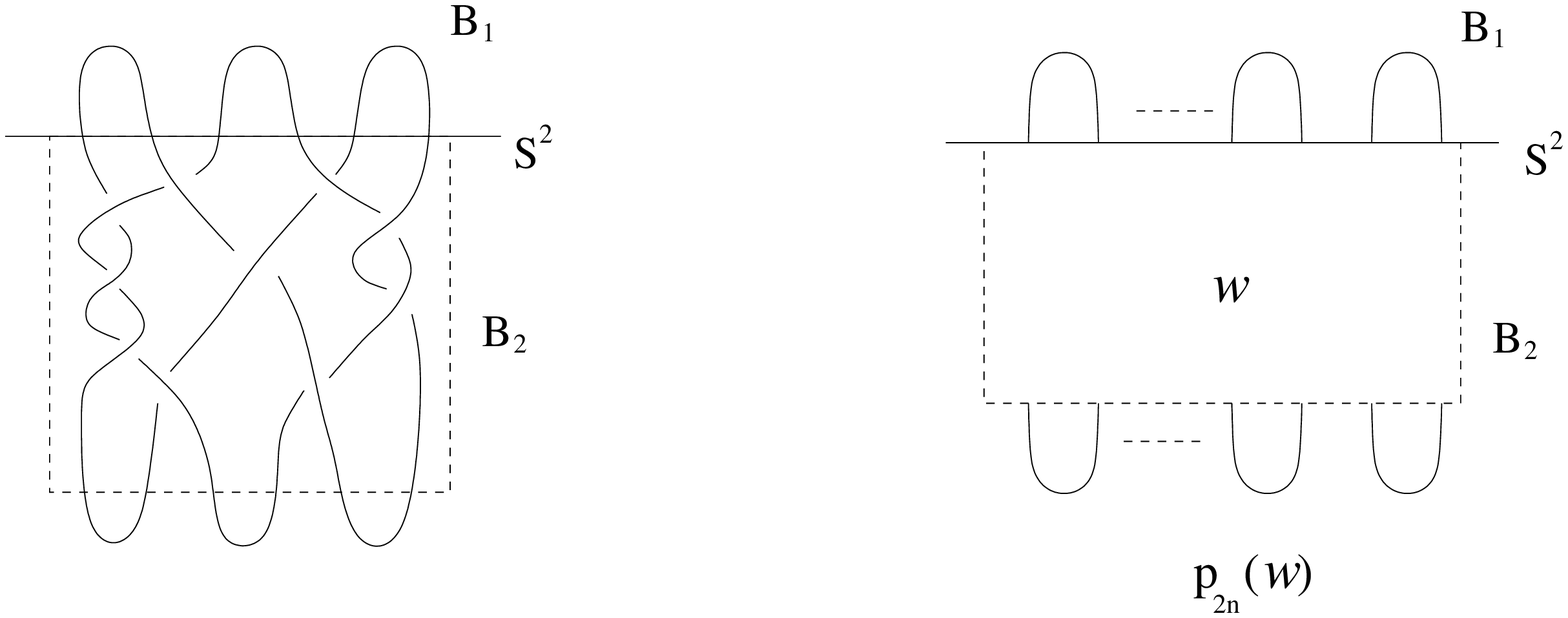}
\caption{}
\end{figure}\\

 Then we say that a plat presentation  is $\emph{standard}$ if the $2n$-braid $w$ of $p_{2n}(w)$  involves only $\sigma_2,\sigma_3,\cdot\cdot\cdot,\sigma_{2n-1}$.\\
 
Let $q_{2n}(w)=p_{2n}(w)\cap B_2$ be the $\emph{plat presentation}$  for the rational $n$-tangles $K\cap B_2$. [See [6].]\\

We say that $\overline{q_{2n}(w)}= p_{2n}(w)$  is the $\emph{plat closure}$ of $q_{2n}(w)$.\\

The tangle diagrams with the circles in Figure 3 give the  diagrams of trivial rational $2,3$-tangles as in $[1],[4],[7],[10]$. The right side of the diagrams show the trivial rational $2,3$-tangles in $B_2$.\\

We note that $q_{2n}(w)$  is alternating if and only if $\overline{q_{2n}(w)}$  is alternating.\\

A tangle $T$ is  $\emph{reduced}$ alternating if $T$ is alternating and $T$ does not have a self-crossing which can be removed by a Type I Reidemeister move. (See [1].)
We say that a knot $K$ is in \emph{n-bridge position} if the projection of $K$ onto the $xy$-plane has a plat presentation $p_{2n}(w)$. \\
\begin{figure}[htb]
\includegraphics[scale=.25]{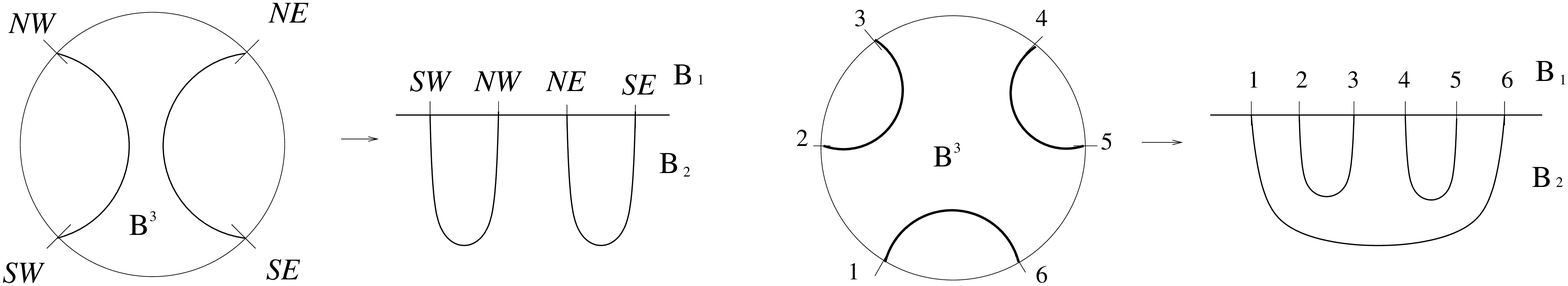}
\caption{}
\end{figure}

Let $\Lambda=\mathbb{Z}[a,a^{-1}]$ and $L$ be a link.\\

We recall that the Kauffman bracket $<L>\in \Lambda$ of a link $L$ is obtained from the following three axioms (See [1].)

\begin{figure}[htb]
\includegraphics[scale=.4]{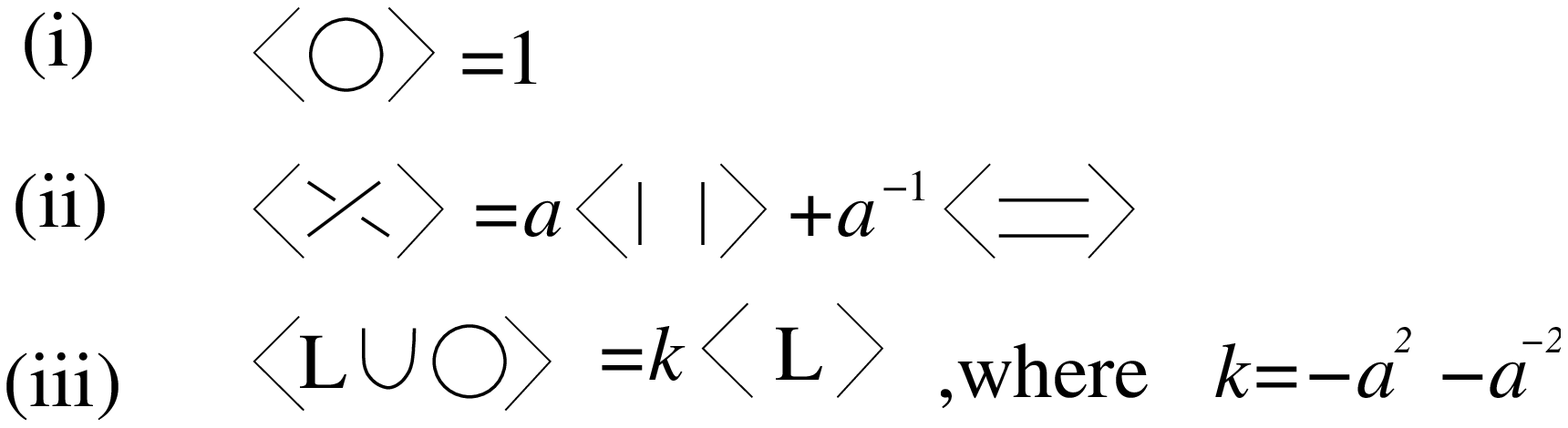}
\end{figure}

The symbol $< ~>$ indicates that the changes are made to the diagram locally, while the rest of the diagram is fixed.\\

 The Kauffman polynomial $X_L(a)\in \Lambda$ is defined by 
\begin{center}
$X_L(a)=(-a^{-3})^{w(\overrightarrow{L})}<L>$,
\end{center}
where the writhe $w(\overrightarrow{L})\in\mathbb{Z}$ is obtained by assigning an orientation to $L$, and taking a sum over all crossings of $L$ of their indices $e$, which are given by the following rule
 
\begin{figure}[htb]
 \includegraphics[scale=.4]{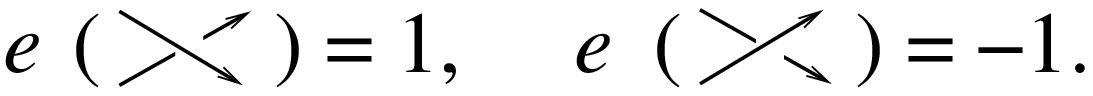}
\end{figure} 

In section 2, we introduce a theorem that explains how to calculate the Kauffman brackets for 4-plat presentations of knots.\\

In section 3, we show the main theorem that gives us a formula to calculate the Kauffman brackets for 6-plat presentations of knots.\\

In section 4, we generalize the formulas given in sections 3 to the Kauffman brackets of $2n$-plat presentations of knots.\\ 

Then, we give a method to calculate the writhes of of $n$-bridge presentations in section 5.\\

The author would like to thank his advisor Dr. Myers for his consistent encouragement and sharing his enlightening ideas on this topic.

\section{The Kauffman brackets of the 4-plat presentations of knots }

For given 2-tangles $T$ and $U$, we denote by $T+U$ the tangle sum of $T,~U$ and by $T*U$ the ``vertical sum" of $T,~U$ as in Figure 4.\\

\begin{figure}[htb]
\includegraphics[scale=.3]{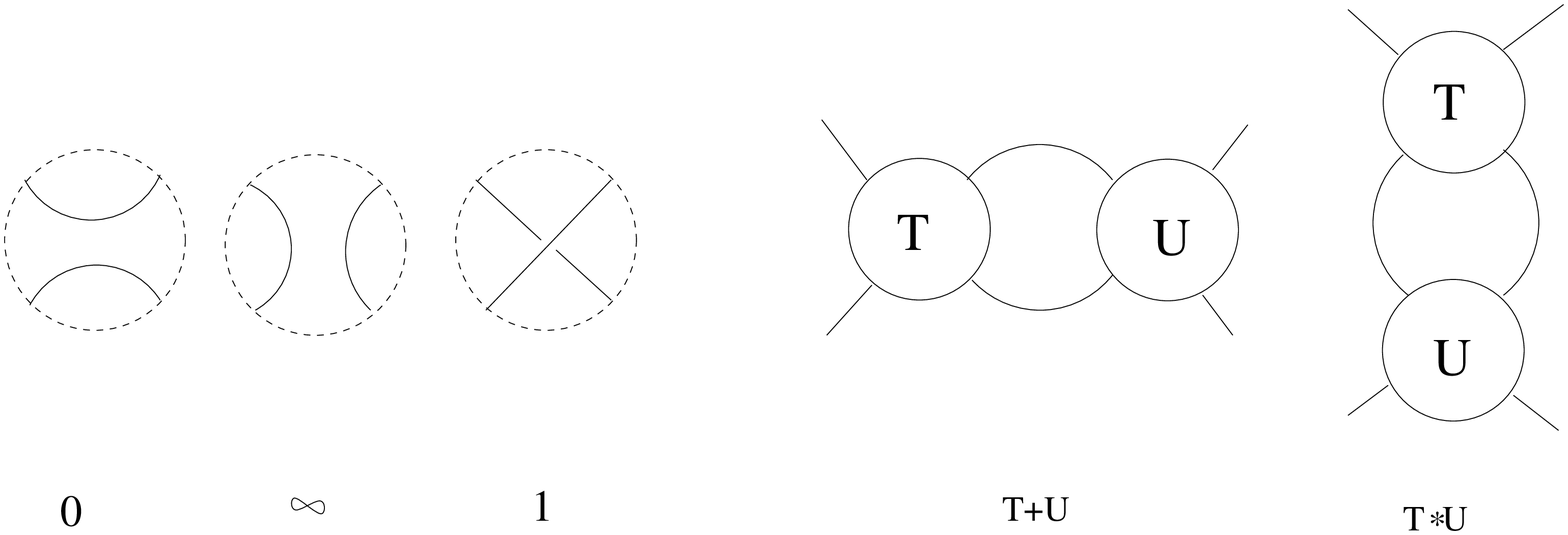}
\caption{the tangles 0,$\infty$, 1 and the tangle combinations $T+U,~T * U$.}
\end{figure}

Goldman and  Kauffman [6] define the $\emph{bracket polynomial}$ of the rational 2-tangle diagram $T$ as 
 $<T>=f^{T}(a)<0>+g^{T}(a)<{\infty}>$, where the coefficients $f^{T}(a)$ and $g^{T}(a)$ are Laurent polynomials  that are obtained by starting with $T$ and using the three axioms repeatedly until only the two trivial tangles $T_0$ and $T_{\infty}$ in the expression given for $T$ are left. Then we define the $\emph{bracket vector}$ of $T$ to be the ordered pair $[f^T(a),g^T(a)]^t$, and denote it by $br(T)$. For example, $br(1)=[a^{-1},a]^t$, where $1$ is the rational 2-tangle with only one positive crossing.\\
 
 Eliahou-Kauffman-Thistlethwaite [5] established the following.

\begin{proposition} For given  2-tangles $T$ and $U$, and $k=-a^2-a^{-2}$,\\

$br(T+U)=\left[\begin{array}{cc}
f^U(a) & 0\\
g^U(a) & f^U(a)+k g^U(a)\\
\end{array}
\right]br(T)$ and,
$ br(T\ast U)=\left[\begin{array}{cc}
k f^U(a)+g^U(a) & f^U(a)\\
0 &  g^U(a)\\
\end{array}
\right]br(T).$

 \end{proposition}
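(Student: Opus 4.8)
The plan is to exploit the fact that the Kauffman bracket is computed by purely \emph{local} skein resolutions, so that the crossings of $T$ and of $U$ may be resolved independently inside the disjoint sub-disks they occupy in the glued diagram. First I would recall that, by the very definition of the bracket vector, fully resolving the crossings of $T$ replaces the $T$-tangle inside its disk by the formal combination $f^T(a)<0>+g^T(a)<\infty>$, and likewise for $U$. Because in $T+U$ the disk carrying $T$ is disjoint from the disk carrying $U$, I can apply these two resolutions in succession to obtain the bilinear expansion
\[
<T+U>=f^Tf^U<0+0>+f^Tg^U<0+\infty>+g^Tf^U<\infty+0>+g^Tg^U<\infty+\infty>,
\]
and identically for $T\ast U$ with $+$ replaced by $\ast$. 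The content of this step is just that the coefficients $f^T,g^T$ are intrinsic to $T$, which is exactly the well-definedness of $br(T)$ recorded before the statement.

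The core of the argument is then the finite computation of the four ``base'' combinations, carried out by tracing the arcs of the trivial tangles $0$ and $\infty$ and recording each closed loop that is created (each such loop contributing the factor $k=-a^2-a^{-2}$ from the loop axiom). For the horizontal sum one reads off from the pictures that
\[
<0+0>=<0>,\qquad <0+\infty>=<\infty>,\qquad <\infty+0>=<\infty>,\qquad <\infty+\infty>=k<\infty>,
\]
the last equality because juxtaposing two $\infty$-tangles closes off a single circle and leaves an $\infty$-tangle behind. Substituting these into the expansion above gives $f^{T+U}=f^Tf^U$ and $g^{T+U}=f^Tg^U+g^Tf^U+k\,g^Tg^U$, which is precisely the first matrix of the statement applied to $br(T)=[f^T,g^T]^t$.

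For the vertical sum the same procedure applies verbatim, now with the four base cases
\[
<0\ast0>=k<0>,\qquad <0\ast\infty>=<0>,\qquad <\infty\ast0>=<0>,\qquad <\infty\ast\infty>=<\infty>,
\]
where it is $<0\ast0>$ that now acquires the loop factor $k$. These give $f^{T\ast U}=k\,f^Tf^U+f^Tg^U+g^Tf^U$ and $g^{T\ast U}=g^Tg^U$, that is, the second matrix of the statement, which completes the proof once the two cases are assembled.

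I expect the only genuinely delicate point to be the first step: justifying that the bracket of the glued tangle is the \emph{bilinear} combination of the base brackets with the intrinsic coefficient functions of the pieces. Diagrammatically this is transparent because the skein relation is local, but stating it cleanly amounts to noting that $<0>$ and $<\infty>$ span the relevant ``tangle module'' and that the gluing operations $+$ and $\ast$ are bilinear on it; granting this, everything else reduces to the eight elementary diagram checks displayed above.
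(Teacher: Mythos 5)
Your proof is correct, but there is no internal argument in the paper to compare it with: the paper does not prove this proposition, it simply quotes it as a result established by Eliahou, Kauffman and Thistlethwaite [5]. What you have written is the standard self-contained verification that the cited paper carries out. The locality of the three bracket axioms justifies the bilinear expansion $<T+U>=f^Tf^U<0+0>+f^Tg^U<0+\infty>+g^Tf^U<\infty+0>+g^Tg^U<\infty+\infty>$ (and likewise for $\ast$), and your eight base-case evaluations are all correct, the factor $k$ appearing exactly when a closed circle is produced, namely in $\infty+\infty$ and in $0\ast 0$. Collecting coefficients of $<0>$ and $<\infty>$ gives exactly the two matrices in the statement: $f^{T+U}=f^Uf^T$, $g^{T+U}=g^Uf^T+(f^U+kg^U)g^T$, and $f^{T\ast U}=(kf^U+g^U)f^T+f^Ug^T$, $g^{T\ast U}=g^Ug^T$. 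Your conventions also agree with the paper's: specializing $U=1$ with $br(1)=[a^{-1},a]^t$, your formulas give $f^1+kg^1=a^{-1}+(-a^2-a^{-2})a=-a^3$ and $kf^1+g^1=(-a^2-a^{-2})a^{-1}+a=-a^{-3}$, reproducing the matrices $M_+$ and $M_\ast$ displayed after the proposition. The point you flag as delicate, bilinearity of the glued bracket with intrinsic coefficients, is indeed the crux, and your locality remark resolves it: the sequence of skein and loop moves that reduces $T$ to $f^T<0>+g^T<\infty>$ takes place inside a disk disjoint from $U$, so the same moves apply verbatim inside the glued diagram (any circle created in that disk is still a disjoint circle in the larger diagram), giving $<T+U>=f^T<0+U>+g^T<\infty+U>$ and then the full expansion by repeating the argument for $U$. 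So there is no gap; your write-up buys completeness where the paper settles for a citation.
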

 
 So, if $U=1$ in Proposition 2.1 then we have the following equalities.\\

$br(T+1)=M_+\cdot br(T)$, $br(T*1)=M_*\cdot br(T)$, where \\

$M_+= \left[ \begin{array}{cc}
a^{-1} & 0 \\
a & -a^{3} \\
 \end{array} \right]$ and $M_*= \left[ \begin{array}{cc}
-a^{-3} & a^{-1} \\
0 & a \\
 \end{array} \right].$\\

 Two rational $2$-tangles, $T,T'$, in $B^3$ are $\emph{isotopic}$, denoted by $T\sim T'$, if there is an orientation-preserving self-homeomorphism
$h: (B^3, T)\rightarrow (B^3,T')$ that is the identity map on the boundary. \\

We say that $T^{hflip}$ is the $\emph{horizontal flip}$ of the 2-tangle $T$ if $T^{hflip}$ is obtained from $T$ by a $180^\circ$-rotation around a horizontal axis on the plane of $T$, and $T^{vflip}$ is the $\emph{vertical flip}$ of the tangle $T$ if $T^{vflip}$ is obtained from $T$ by a $180^\circ$-rotation around a vertical axis, see Figure 5 for illustrations. Then we have the following lemma by Kauffman.
 
\begin{lemma}$($Flipping Lemma $[7])$
If $T$ is rational 2-tangle, then $T\sim T^{vflip}$ and $T\sim T^{hflip}$.
\end{lemma}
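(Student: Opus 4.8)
The plan is to induct on the number of crossings of a rational-tangle diagram, using the recursive description of rational 2-tangles: every such tangle is obtained from the trivial tangles $0$ and $\infty$ by a finite sequence of horizontal twists $T\mapsto T+(\pm1)$ and vertical twists $T\mapsto T*(\pm1)$. For the base case I would treat $T\in\{0,\infty,1,-1\}$ directly, writing down explicit ambient isotopies of $(B^3,T)$ that fix $\partial B^3$ pointwise and carry $T$ to $T^{vflip}$ and to $T^{hflip}$; with at most one crossing present these are routine.

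For the inductive step I would record how each flip interacts with a single twist by tracking the rigid motion that defines it. Realizing the horizontal flip as the rotation $(x,y,z)\mapsto(x,-y,-z)$ and the vertical flip as $(x,y,z)\mapsto(-x,y,-z)$, two facts emerge. First, a lone crossing is carried to an isotopic lone crossing by either flip: each flip reflects the diagram in one coordinate and simultaneously reverses the over/under coordinate $z$, and these two sign reversals cancel, so the crossing's type is preserved; hence $1^{hflip}\sim1$ and $1^{vflip}\sim1$. Second, each flip permutes the side on which the terminal twist sits. Because the horizontal flip fixes the left/right coordinate $x$, it keeps a horizontal twist on the right, giving the aligned relation $(T+1)^{hflip}\sim T^{hflip}+1$; likewise the vertical flip fixes the top/bottom coordinate $y$, giving $(T*1)^{vflip}\sim T^{vflip}*1$. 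Combined with the inductive hypothesis $T^{hflip}\sim T$ and $T^{vflip}\sim T$, these settle the horizontal flip of a horizontal twist and the vertical flip of a vertical twist.

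The main obstacle is the two crossed cases: the vertical flip of a horizontal twist and the horizontal flip of a vertical twist. Here the flip relocates the terminal crossing to the opposite side, producing $1+T^{vflip}$ and $1*T^{hflip}$ rather than the desired $T^{vflip}+1$ and $T^{hflip}*1$. To close the induction I must slide this crossing back across the tangle, i.e. establish $1+S\sim S+1$ and $1*S\sim S*1$ rel boundary for a rational tangle $S$. I would prove these by a flype, an isotopy that drags the isolated crossing around $S$ while fixing the four endpoints; the move is available precisely because $S$ is rational, so that the two strands bounding the crossing can be guided through the ball. Assembling the aligned relations, the slide relations, and the inductive hypothesis then verifies the statement for $T+(\pm1)$ and $T*(\pm1)$ simultaneously, completing the induction. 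I expect the verification that the flype is a genuine isotopy fixing $\partial B^3$ pointwise, rather than merely an ambient isotopy of $S^3$ that moves the endpoints, to require the most care, since that boundary condition is the entire content of the lemma.
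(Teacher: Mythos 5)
The paper itself offers no proof of this lemma; it quotes it from [7] (Kauffman--Lambropoulou), so your attempt can only be compared with the standard proof there, which is indeed an induction of exactly the shape you describe: induct on crossing number, observe that a single crossing is invariant under both flips, dispose of the two aligned cases directly, and handle the two crossed cases by a flype. The gap in your write-up sits precisely at that last step, which is the crux of the whole argument. A flype is not an isotopy that carries the crossing around a \emph{fixed} tangle $S$, and its availability has nothing to do with $S$ being rational: for an \emph{arbitrary} 2-tangle $S$, the flype transports the crossing to the other side at the cost of rotating the subtangle by $180^{\circ}$, i.e. it gives $1+S\sim S^{hflip}+1$ and $1*S\sim S^{vflip}*1$, not $1+S\sim S+1$. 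No isotopy rel boundary can in general return $S$ unrotated, and for rational $S$ the relation $1+S\sim S+1$ is (via the flype) equivalent to $S^{hflip}\sim S$ --- the very statement being proven. So your justification of the slide relation, ``the two strands bounding the crossing can be guided through the ball because $S$ is rational,'' assumes the conclusion; rationality is irrelevant to the flype and is exactly what cannot be invoked at this point.

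The repair is local and is the step you omitted: inside the induction the subtangle is $T'$ with one fewer crossing, so the inductive hypothesis may be applied \emph{after} the flype to remove the unwanted rotation. Concretely, for $T=T'+1$ one has $T^{vflip}=1+T'^{vflip}\sim 1+T'$ (inductive hypothesis, vertical flip), then $1+T'\sim T'^{hflip}+1$ (flype, valid for every tangle), then $T'^{hflip}+1\sim T'+1=T$ (inductive hypothesis, horizontal flip); the case $T=T'*1$ under the horizontal flip is symmetric, using the vertical flype. Note that this uses \emph{both} flip statements for $T'$, which is why they must be carried simultaneously through the induction --- your outline already sets this up, so the fix costs nothing structurally. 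Two smaller remarks: if one adopts the definition of rational tangles that permits twists on all four sides (as in [7]), your induction must also cover $T=1+T'$ and $T=1*T'$, which are handled symmetrically; and your closing worry is inverted --- verifying that the flype is an isotopy fixing $\partial B^3$ pointwise is routine, whereas the non-circular use of the flype is where the care is actually needed.
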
 

We note that any rational 2-tangle $T$ can be obtained from an element $u$ of the braid group $\mathbb{B}_3$ as in the first bottom diagram of Figure 5. (Refer to [7].)\\

For $u=\sigma_{k_1}^{\epsilon_1}\sigma_{k_2}^{\epsilon_2}\cdot\cdot\cdot\sigma_{k_n}^{\epsilon_n}$, the reverse word of $u$, denoted by $u^r$, is defined by the word $u^r=\sigma_{k_n}^{\epsilon_n}\sigma_{k_{n-1}}^{\epsilon_{n-1}}\cdot\cdot\cdot \sigma_{k_1}^{\epsilon_1}$.
Then by Lemma 2.2, we see  how to get a word $u^r$ for a 4-plat presentation of a rational 2-tangle $T$ as in the bottom diagrams of Figure 5.\\

 \begin{figure}[htb]
\includegraphics[scale=.25]{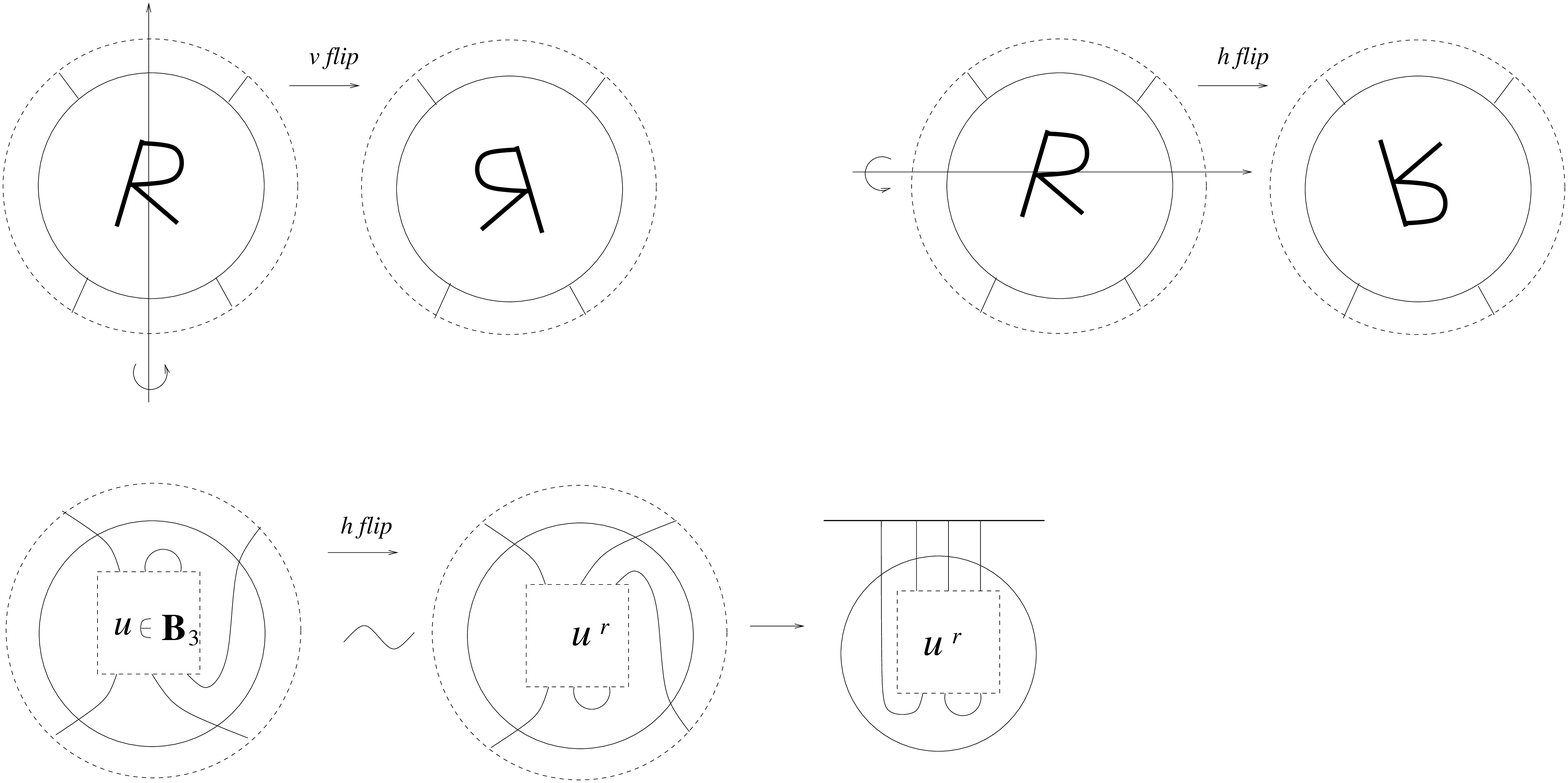}
\caption{}
\end{figure}
 
 Suppose that $R$ is a rational 2-tangle. Let $u$ be the word for a standard 4-plat presentation of $R$.
Then, by modifying the diagrams of $R+1$ and $R\ast 1$ as in Figure 6,  we see that $u'=\sigma_3^{-1}w$ and $u''=\sigma_2w$, where $u'$ and $u''$ are the words for 4-plat presentations of $R+1$ and $R\ast 1$ respectively.\\

\begin{figure}[htb]
\includegraphics[scale=.3]{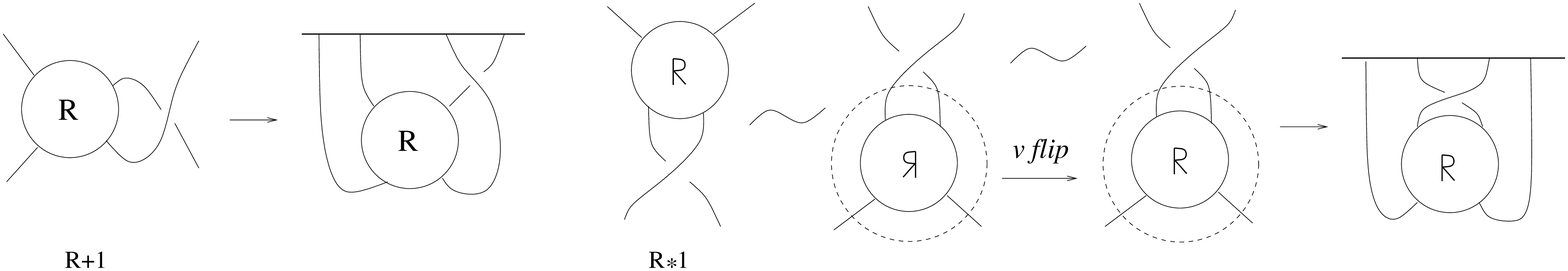}
\caption{}
\end{figure}

Now, consider the following theorem.
 
 \begin{theorem}$[$Conway$,($See$ ~[10])]$
If $K$ is a 2-bridge knot, then there exists a word $w$ in $\mathbb{B}_4$ so that the  plat presentation $p_4(w)$ is reduced alternating and standard and represents a knot isotopic to $K$.
\end{theorem}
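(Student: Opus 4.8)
The plan is to invoke Conway's correspondence between 2-bridge knots and rational 2-tangles and then to exploit the twist-by-twist construction of rational tangles together with the prepending rules $R+1\mapsto\sigma_3^{-1}w$ and $R\ast 1\mapsto\sigma_2 w$ established just above. Since $K$ is a 2-bridge knot it is, by definition, the plat closure of a rational 2-tangle $R=q_4(w)$, and such an $R$ is classified by a fraction $p/q$ with $p$ odd and $\gcd(p,q)=1$. First I would fix the canonical continued fraction expansion $p/q=[a_1,a_2,\dots,a_m]$ in which every partial quotient $a_i$ is a positive integer; such an expansion exists and is essentially unique for each rational number. The point of choosing all $a_i>0$ is that this is precisely the arithmetic condition that will force the associated tangle diagram to be alternating.

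Next I would build $R$ from the trivial tangle by Conway's algorithm: add $a_1$ horizontal twists, then $a_2$ vertical twists, then $a_3$ horizontal twists, and so on, alternating the operations $+1$ and $\ast 1$. By the rules recorded above, each horizontal $+1$ prepends $\sigma_3^{-1}$ and each vertical $\ast 1$ prepends $\sigma_2$, so the resulting word $w$ is a product of the generators $\sigma_2$ and $\sigma_3$ only. Consequently $p_4(w)$ is standard, and $w\in\mathbb{B}_4$; moreover, by construction the plat closure $\overline{q_4(w)}=p_4(w)$ is the rational tangle closure of $R$, which is isotopic to $K$. This settles membership, standardness, and isotopy type simultaneously.

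It remains to verify that $p_4(w)$ is \emph{reduced alternating}, and this is the crux. I would argue that because all $a_i>0$, the two twist operations contribute crossings of opposite handedness---reflected in the opposite signs of $\sigma_2$ and $\sigma_3^{-1}$---so that within each block the crossings are uniform (a one-sided twist region is alternating), while across every junction between a horizontal block and the next vertical block the over/under pattern continues to alternate. Hence the whole diagram is alternating. For reducedness, the normalization $a_i\ge 1$ for all $i$ (with the standard convention that the first and last partial quotients are positive) guarantees that no twist block is empty and that no crossing is nugatory; any Type I--removable self-crossing would correspond to a cancellation in the continued fraction, contradicting the all-positive canonical form.

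The main obstacle is exactly this last verification: the careful bookkeeping of the over/under assignments at the interfaces between horizontal and vertical twist regions, confirming global alternation and the absence of nugatory crossings. This is the geometric content of Conway's original observation that the same-sign continued fraction produces the reduced alternating diagram, and once the sign pattern at each junction is pinned down, both the alternating and reduced properties follow by induction on the length $m$ of the expansion.
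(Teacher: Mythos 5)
You should know at the outset that the paper never proves this statement: it is imported as Conway's theorem, with [10] (Kauffman--Lambropoulou) given as the source, so there is no internal argument to compare yours against. What you have written is a reconstruction of the standard proof from that literature --- classify the rational tangle by its fraction, take a same-sign continued fraction expansion, realize it twist by twist, and translate via the prepending rules $R+1\mapsto\sigma_3^{-1}u$, $R\ast 1\mapsto\sigma_2 u$ into a standard word of exactly the shape appearing in Theorem 2.4. In outline this is the right argument.

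There is, however, one genuine gap. You assert that every rational number admits a continued fraction expansion $[a_1,\dots,a_m]$ with all $a_i$ positive. This is false for negative rationals (and, strictly, fractions in $(0,1)$ need a leading partial quotient $0$ --- which is harmless, and is precisely why Theorem 2.4 allows $\epsilon_1=0$). Nothing in the definition of a $2$-bridge knot prevents the tangle $R=q_4(w)$ you start from having negative fraction, and in that case your construction does not apply; if one ignores the sign and expands $|p/q|$ instead, the tangle built is the mirror image of $R$, so its plat closure is the mirror image of $K$, which in general is \emph{not} isotopic to $K$ (2-bridge knots are typically chiral: the two trefoils $S(3,1)$ and $S(3,2)$ are distinct by Schubert's classification). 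Two standard repairs are available, and you must include one: (i) when the fraction is negative, use the all-negative expansion $[-b_1,\dots,-b_m]$ with $b_i>0$; the construction then produces a standard word in blocks of $\sigma_3^{+1}$ and $\sigma_2^{-1}$, whose diagram is the mirror of a reduced alternating one and hence still reduced alternating --- exactly the mirrored situation the paper treats in the paragraph following Theorem 2.4; or (ii) first normalize the fraction, invoking the fact that the plat closure of the tangle with fraction $p/q$ is unchanged when $q$ is replaced by $q\pm p$ (a full twist absorbed by the closure arcs), so that one may assume $0<q<p$ and hence a positive fraction. Finally, the verification you yourself flag as the crux --- that the same-sign diagram is alternating with no nugatory crossings --- is asserted rather than carried out; since the statement is attributed to Conway with [10] cited, appealing to the canonical-form theorem there is legitimate, but then that citation should be made explicitly, as it is doing the real work.
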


 Let $A_2=M_*$ and  $A_3=M_+^{-1}$.\\
 
 Then we can derive the following theorem which shows how to calculate the Jones polynomials of 4-plat presentations of knots.
 
\begin{theorem}
Suppose that $q_4(w)$ is a  plat presentation of a rational 2-tangle $T$ which is reduced alternating and standard so that
$w=\sigma_3^{-\epsilon_1}\sigma_2^{\epsilon_2}\cdot\cdot\cdot\sigma_3^{-\epsilon_{2n-1}}\sigma_2^{\epsilon_{2n}}$ for some positive  integers $\epsilon_i$ ($2\leq i\leq 2n$) and non-negative integer $\epsilon_1$. Let $A=A_3^{-\epsilon_1}A_2^{\epsilon_2}\cdot\cdot\cdot A_3^{-\epsilon_{2n-1}}A_2^{\epsilon_{2n}}$. Then,\\

\hskip 10pt $<T>=f^T(a)<T_0>+g^T(a)<T_{\infty}>$, where $f^T(a)$ and $g^T(a)$ are given by
$br(T)=[f^T(a),g^T(a)]^t=A[0,1]^t$ and $<K>=f^T(a)-(a^2+a^{-2})g^T(a)$ for $K$, where $K$ is represented by the  plat presentation $\overline{q_4(w)}$.\\

Therefore, $X_K=(-a^{-3})^{w({K})}(f^T(a)-(a^2+a^{-2})g^T(a))$, where $w(K)$ is the writhe of the knot $K$.
\end{theorem}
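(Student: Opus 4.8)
The plan is to build the rational tangle $T$ crossing-by-crossing from a trivial tangle, tracking its bracket vector through the matrices $M_+$ and $M_*$, and then to evaluate the Kauffman bracket of the plat closure by applying the closing arcs to the resulting tangle expansion. First I would record the dictionary between the word $w$ and the matrix product. By the discussion following Lemma 2.2 and the modifications of $R+1$ and $R\ast 1$ shown in Figure 6, prepending $\sigma_3^{-1}$ to the word of a standard $4$-plat realizes the tangle operation $T\mapsto T+1$, while prepending $\sigma_2$ realizes $T\mapsto T\ast 1$. Combining this with the specialization of Proposition 2.1, prepending $\sigma_3^{-1}$ left-multiplies $br(T)$ by $M_+=A_3^{-1}$, and prepending $\sigma_2$ left-multiplies $br(T)$ by $M_\ast=A_2$.

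Next I would run an induction on the number of syllables of $w$. The base case is the trivial tangle underlying the standard plat with empty braid word, which is the $\infty$-tangle $T_\infty$; its defining expansion gives $br(T_\infty)=[0,1]^t$. Reading $w=\sigma_3^{-\epsilon_1}\sigma_2^{\epsilon_2}\cdots\sigma_3^{-\epsilon_{2n-1}}\sigma_2^{\epsilon_{2n}}$ from right to left, so that the innermost syllable $\sigma_2^{\epsilon_{2n}}$ is applied to $T_\infty$ first, each generator contributes its matrix on the left. Since $A_3^{-\epsilon_i}=M_+^{\epsilon_i}$ and $A_2^{\epsilon_i}=M_\ast^{\epsilon_i}$, the accumulated product is exactly $A=A_3^{-\epsilon_1}A_2^{\epsilon_2}\cdots A_3^{-\epsilon_{2n-1}}A_2^{\epsilon_{2n}}$ acting on $[0,1]^t$. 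This yields $br(T)=[f^T(a),g^T(a)]^t=A[0,1]^t$, and hence the expansion $\langle T\rangle=f^T(a)\langle T_0\rangle+g^T(a)\langle T_\infty\rangle$.

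Finally I would pass to the plat closure. Because the bracket expansion $\langle T\rangle=f^T(a)\langle T_0\rangle+g^T(a)\langle T_\infty\rangle$ is valid inside any fixed surrounding diagram, attaching the closing arcs to both sides gives $\langle K\rangle=f^T(a)\langle\overline{T_0}\rangle+g^T(a)\langle\overline{T_\infty}\rangle$, where $\overline{T_0}$ and $\overline{T_\infty}$ denote the plat closures of the two trivial tangles. A direct inspection shows that $\overline{T_0}$ is a single unknot, so $\langle\overline{T_0}\rangle=1$, whereas $\overline{T_\infty}$ is a split union of two unknots, so $\langle\overline{T_\infty}\rangle=-a^2-a^{-2}$ by the loop axiom. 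Substituting produces $\langle K\rangle=f^T(a)-(a^2+a^{-2})g^T(a)$, and multiplying by $(-a^{-3})^{w(K)}$ gives the stated formula for $X_K$ directly from the definition of the Kauffman polynomial.

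The step I expect to be the main obstacle is pinning down the conventions so that everything lines up: verifying that the correct seed is $T_\infty$ (equivalently that $[0,1]^t$, not $[1,0]^t$, is the base vector), that prepending generators corresponds to left-multiplication in the order displayed, and that the closures $\overline{T_0}$ and $\overline{T_\infty}$ contribute one and two loops respectively. Each of these hinges on the orientation and endpoint conventions fixed by Figures 3--6 rather than on any deep computation, so the real risk is a transposition or inversion of the matrices, or an off-by-one in the loop count, rather than a genuine analytic difficulty. Consistency checks such as $M_\ast[0,1]^t=[a^{-1},a]^t=br(1)$ would be used to lock down the orientation conventions before carrying out the induction.
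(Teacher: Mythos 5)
Your proposal is correct and follows essentially the same route as the paper: an induction on the braid word in which prepending $\sigma_3^{-1}$ or $\sigma_2$ realizes the tangle operations $T+1$ or $T\ast 1$ and hence left multiplication of $br(T)$ by $A_3^{-1}=M_+$ or $A_2=M_*$ via Proposition 2.1, seeded by the trivial $\infty$-tangle with bracket vector $[0,1]^t$. Your explicit evaluation of the plat closure ($\overline{T_0}$ a single unknot, $\overline{T_\infty}$ two unknots, giving $<K>=f^T(a)-(a^2+a^{-2})g^T(a)$) is a step the paper leaves implicit in this particular proof but carries out in exactly this way for the 6-plat case in Theorem 3.1, and your loop counts agree with the paper's conventions.
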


\begin{proof}

Let $l=|\epsilon_1|+|\epsilon_2|+\cdot\cdot\cdot+|\epsilon_{2n}|$.\\

We will show this theorem by using induction on $l$.\\

Suppose that $w=\sigma_2$. Then $T=\infty\ast 1$.\\

Therefore, by Proposition 2.1., $br(T)=M_{*}[0,1]^t=A_2[0,1]^t$.\\

Now, we assume that $br(T)=[f^T(a),g^T(a)]^t=A[0,1]^t$ if $T$ is a reduced alternating standard rational 2-tangle with the plat presentation $q_4(w)$, where $w=\sigma_3^{-\epsilon_1}\sigma_2^{\epsilon_2}\cdot\cdot\cdot\sigma_2^{\epsilon_{2n}}$ for some positive  integers $\epsilon_i$ ($2\leq i\leq 2n$) and non-negative integer $\epsilon_1$, and $l=|\epsilon_1|+|\epsilon_2|+\cdot\cdot\cdot+|\epsilon_{2n}|=k$.\\

Now, consider a reduced alternating standard rationl 2-tangle $T'$ with the plat presentation $q_4(w')$, where  $w'=\sigma_3^{-\epsilon_1'}\sigma_2^{\epsilon_2'}\cdot\cdot\cdot\sigma_2^{\epsilon_{2m}'}$ for some positive  integers $\epsilon_i$ ($2\leq i\leq 2m$) and non-negative integer $\epsilon_1'$, and $l=|\epsilon_1'|+|\epsilon_2'|+\cdot\cdot\cdot+|\epsilon_{2m}'|=k+1$.\\

If $\epsilon_i'\geq 1$ then we set $w''=\sigma_3^{-(\epsilon_1'-1)}\sigma_2^{\epsilon_2'}\cdot\cdot\cdot\sigma_2^{\epsilon_{2m}'}$. Then $w'=\sigma_3^{-1}w''$.\\

Let $T''$ be the reduced alternating standard rational 2-tangle with the plat presentation $q_4(w'')$.\\

Let $A'=A_3^{-\epsilon_1'}A_2^{\epsilon_2'}\cdot\cdot\cdot A_2^{\epsilon_{2m}'}$ and $A''=A_3^{-(\epsilon_1'-1)}A_2^{\epsilon_2'-1}\cdot\cdot\cdot A_2^{\epsilon_{2m}'}$.\\

Since $|\epsilon_1'-1|+|\epsilon_2'|+\cdot\cdot\cdot+|\epsilon_{2m}'|=k$, we note that $br(T'')=[f^{T''}(a),g^{T''}(a)]^t=A''[0,1]^t$ by assumption.
We note that $T'=T''+1$. So, by Proposition 2.1., $br(T')=M_+\cdot br(T'')=A_3^{-1}(A''[0,1]^t)=(A_3^{-1}A'')[0,1]^t=A'[0,1]^t$.\\

If $\epsilon_1'=0$ then we set $w''=\sigma_2^{\epsilon_2'-1}\cdot\cdot\cdot\sigma_2^{\epsilon_{2m}'}$. Then $w'=\sigma_2w''$.\\

Let $T''$ be the reduced alternating standard rational 2-tangle with the plat presentation $q_4(w'')$.\\
Let $A'=A_2^{\epsilon_2'}\cdot\cdot\cdot A_2^{\epsilon_{2m}'}$ and $A''=A_2^{\epsilon_2'-1}\cdot\cdot\cdot A_2^{\epsilon_{2m}'}$.\\

Since $|\epsilon_2'-1|+\cdot\cdot\cdot+|\epsilon_{2m}'|=k$, we note that $br(T'')=[f^{T''}(a),g^{T''}(a)]^t=A''[0,1]^t$ by assumption.\\
We note that $T'=T''\ast 1$. So, by Proposition 2.1., $br(T')=M_*\cdot br(T'')=A_2(A''[0,1]^t)=(A_2A'')[0,1]^t=A'[0,1]^t$.
\end{proof}

Now, assume that $q_4(w)$ be a 4-plat presentation of a rational 2-tangle $T$ which is reduced alternating and standard so that $w=\sigma_3^{-\epsilon_1}\sigma_2^{\epsilon_2}\cdot\cdot\cdot\sigma_3^{-\epsilon_{2n-1}}\sigma_2^{\epsilon_n}$ for some negative integers $\epsilon_i$ ($2\leq i\leq 2n$) and non-positive integer $\epsilon_1$.\\

We can calculate the Kauffman bracket of $q_4((w^{-1})^{r})$ by usinge the previous theorem.\\

We note that $q_4(w)$ is the mirror image of  $q_4((w^{-1})^{r})$ which is obtained by interchanging the over and under crossings.\\

So, we switch $a$ and $a^{-1}$ to calculate the Kauffman bracket of the 4-plat presentation $q_4(w)$ of the rational 2-tangle $T$.

\section{The Kauffman brackets of the 6-plat presentations of knots}

Now, suppose that $K$ is in  3-bridge position. Then we have a plat presentation $q_6(w)$ for the rational 3-tangle $K\cap B_2$.
Then, let $w=\sigma_{k_1}^{\epsilon_1}\sigma_{k_2}^{\epsilon_2}\cdot\cdot\cdot
\sigma_{k_{n-1}}^{\epsilon_{n-1}}\sigma_{k_{n}}^{\epsilon_{n}}$ for some non-zero integers $\epsilon_i$ ($1\leq i\leq n$), where $k_i\in\{1,2,3,4,5\}$.\\

\begin{figure}[htb]
\includegraphics[scale=.35]{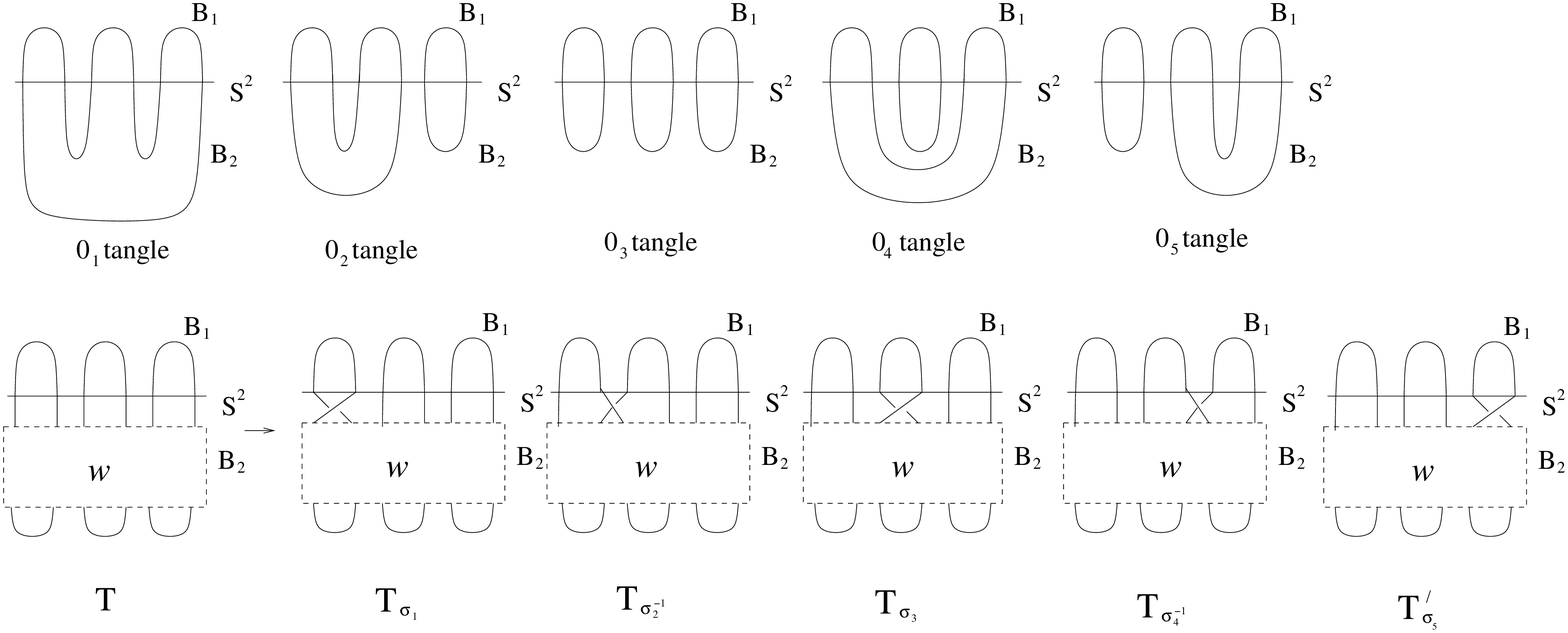}
\caption{}
\end{figure}

H. Cabrera-Ibarra [4] defined the bracket polynomial of the rational 3-tangle $T$ as $<T>=f^T_1(a)<{0_1}>+f^T_2(a)<{0_2}>+f^T_3(a)<{0_3}>+f^T_4(a)<{0_4}>+f^T_5<{0_5}>$, where 
$f^T_i(a)$ are polynomials in $a$ and $a^{-1}$ that are obtained by starting with $T$ and using the three axioms repeatedly until only the five trivial tangles $<{0_j}>$ in the expression given for $T$ are left. (See Figure 6 and 7.)   \\

Let $\mathcal{A}=<{0_1}>, \mathcal{B}=<{0_2}>, \mathcal{C}=<{0_3}>, \mathcal{D}=<{0_4}>$ and $\mathcal{E}=<{0_5}>$.\\

Let $B_1^{\pm 1}= \left[ \begin{array}{ccccc}
a^{\pm 1} &0& 0& 0& 0 \\
0 & a^{\pm 1} &0 &0 &0 \\
0& a^{\mp 1}& -a^{\mp 3}& a^{\mp 1}&0 \\
0& 0&0 &a^{\pm 1} &0\\
a^{\mp 1}& 0& 0& 0&-a^{\mp 3}\\
 \end{array} \right],$
\hskip 18pt $B_2^{\pm 1}= \left[ \begin{array}{ccccc}
-a^{\mp 3}&0& 0& a^{\mp 1}& a^{\mp 1} \\
0 & -a^{\mp 3} &a^{\mp 1} &0 &0 \\
0& 0& a^{\pm 1}& 0&0 \\
0& 0&0 &a^{\pm 1} &0\\
0& 0& 0& 0&a^{\pm 1}\\
 \end{array} \right]$,\\
\vskip 15pt

$~\hskip 18pt B_3^{\pm 1}= \left[ \begin{array}{ccccc}
a^{\pm 1}&0& 0& 0& 0 \\
0 & a^{\pm 1} &0 &0 &0 \\
0& a^{\mp 1}& -a^{\mp 3}& 0&a^{\mp 1} \\
a^{\mp 1}& 0&0 &-a^{\mp 3} &0\\
0& 0& 0& 0&a^{\pm 1}\\
 \end{array} \right]$,~\hskip 18pt$B_4^{\pm 1}= \left[ \begin{array}{ccccc}
-a^{\mp 3}& a^{\mp 1}& 0&a^{\mp 1}&0 \\
0 & a^{\pm 1} &0 &0 &0 \\
0& 0& a^{\pm 1}& 0&0 \\
0& 0&0 &a^{\pm 1} &0\\
0& 0& a^{\mp 1}& 0&-a^{\mp 3}\\
 \end{array} \right],\\$
 \vskip 15pt

$~\hskip 18ptB_5^{\pm 1}=\left[ \begin{array}{ccccc}
a^{\pm 1}&0& 0& 0& 0 \\
a^{\mp 1}& -a^{\mp 3} &0 &0 &0 \\
0& 0& -a^{\mp 3}& a^{\mp 1}&a^{\mp 1} \\
0& 0&0 &a^{\pm 1} &0\\
0& 0& 0& 0&a^{\pm 1}\\
 \end{array} \right]. $\\
 
Let $B=B_{k_1}^{\epsilon_1}B_{k_2}^{\epsilon_2}\cdot\cdot\cdot B_{k_{n-1}}^{\epsilon_{n-1}}B_{k_{n}}^{\epsilon_{n}}$.\\

Then we have the following theorem to calculate the Kauffman polynomial of $K$. 
\begin{theorem} Suppose that $q_6(w)$ is a  plat presentation for a rational 3-tangle $T$ and $w=\sigma_{k_1}^{\epsilon_1}\sigma_{k_2}^{\epsilon_2}\cdot\cdot\cdot
\sigma_{k_{n-1}}^{\epsilon_{n-1}}\sigma_{k_{n}}^{\epsilon_{n}}$ for some non-zero integers $\epsilon_i$ ($1\leq i\leq n$), where $k_i\in\{1,2,3,4,5\}$.\\

 Then $ <T>=f^T_1(a)\mathcal{A}+f^T_2(a)\mathcal{B}+f^T_3(a)\mathcal{C}+f^T_4(a)\mathcal{D}+f^T_5(a)\mathcal{E}$ , where $f^T_i(a)$ are given by\\

 $[f^T_1(a)~ f^T_2(a)~f^T_3(a)~f^T_4(a)~f^T_5(a)]^t=B[0 ~0~1~0~0]^r,$  and $B= B_{k_1}^{\epsilon_1}B_{k_2}^{\epsilon_2}\cdot\cdot\cdot
B_{k_{n-1}}^{\epsilon_{n-1}}B_{k_{n}}^{\epsilon_{n}}$. (i.e., the third column of $B$)\\

Also, $<K>=f^T_1(a)+k(f^T_2(a)+f^T_4(a)+f^T_5(a))+k^2f^T_3(a)$, where $k=-a^2-a^{-2}$ and $K$ is the knot which is represented by the  plat presentation $\overline{q_6(w)}$.\\

Therefore, $X_K=(-a^{-3})^{w(\overrightarrow{K})}(f^T_1(a)+k(f^T_2(a)+f^T_4(a)+f^T_5(a))+k^2f^T_3(a))$.
\end{theorem}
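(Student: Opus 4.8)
The plan is to imitate the inductive proof of Theorem 2.5, replacing the two-dimensional bracket space of rational $2$-tangles by the five-dimensional bracket space of rational $3$-tangles of Cabrera-Ibarra [4], and the $2\times 2$ matrices $M_\pm, M_\ast$ by the $5\times 5$ matrices $B_1^{\pm1},\ldots,B_5^{\pm1}$. The seed of the induction is the trivial rational $3$-tangle obtained from the identity braid together with the bottom plat arcs; this is the tangle $0_3$, whose bracket expansion is $<0_3>=\mathcal{C}$, so that its bracket vector is $[0,0,1,0,0]^t$. Setting $l=|\epsilon_1|+\cdots+|\epsilon_n|$, I would induct on $l$, peeling off one power of the leftmost generator at a time exactly as in the $4$-plat argument.

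For the inductive step, write $w=\sigma_{k_1}^{\pm1}w'$, where $w'$ has total exponent $l-1$ and presents a rational $3$-tangle $T'$, and let $T$ present $w$. Geometrically, prepending $\sigma_{k_1}^{\pm1}$ adjoins a single crossing between strands $k_1$ and $k_1+1$; I would show that on bracket vectors this operation is exactly left multiplication by $B_{k_1}^{\pm1}$, so that $br(T)=B_{k_1}^{\pm1}\,br(T')=B_{k_1}^{\pm1}\big(B'[0,0,1,0,0]^t\big)=B[0,0,1,0,0]^t$ and the induction closes. The verification that each generator acts by the stated matrix is the heart of the proof and the main obstacle: applying the Kauffman smoothing axiom to the new crossing splits it, with weights $a$ and $a^{-1}$, into two diagrams, each of which must be isotoped back into the span of the five trivial tangles $0_1,\ldots,0_5$, sometimes liberating a disjoint loop that contributes the factor $k=-a^2-a^{-2}$. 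Unlike the $2$-tangle case, where Proposition 2.1 records the transition matrices once and for all, here one must carry out this bookkeeping for all five crossing positions, tracking the image of each of the five trivial tangles; the recurring entry $-a^{\mp3}=a^{\mp1}k+a^{\pm1}$ is precisely the signature of a smoothing that returns a trivial tangle while splitting off one loop.

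Granting $<T>=f_1^T\mathcal{A}+f_2^T\mathcal{B}+f_3^T\mathcal{C}+f_4^T\mathcal{D}+f_5^T\mathcal{E}$, I would pass to the plat closure. Because the bracket expansion of $T$ remains valid inside any larger diagram containing $T$, it is $\Lambda$-linear in the replacement of $T$ by the $0_j$, so $<K>=<\overline{q_6(w)}>=\sum_j f_j^T<\overline{0_j}>$. It then remains to evaluate each closed trivial tangle. Capping $0_j$ with the top plat arcs produces a disjoint union of $c_j$ unknotted, unlinked circles, whence $<\overline{0_j}>=k^{c_j-1}$ by the bracket axioms. A direct count from Figures 6 and 7 gives $c_1=1$, $c_2=c_4=c_5=2$, and $c_3=3$, and substituting yields $<K>=f_1^T+k(f_2^T+f_4^T+f_5^T)+k^2f_3^T$.

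Finally, the formula $X_K=(-a^{-3})^{w(\overrightarrow{K})}<K>$ is immediate from the definition of the Kauffman polynomial recalled in the introduction, once an orientation is fixed; the actual computation of the writhe $w(\overrightarrow{K})$ is postponed to Section 5. In summary, every step outside the matrix verification and the circle counts is formal bookkeeping that runs parallel to the proof of Theorem 2.5, and I expect those two computations to be the only places demanding genuine care.
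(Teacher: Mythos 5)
Your proposal is correct and follows essentially the same route as the paper: verify that adjoining a generator $\sigma_j^{\pm 1}$ acts on the five-dimensional bracket vector by left multiplication by $B_j^{\pm 1}$ (the paper does this explicitly for $\sigma_1^{\pm 1}$ via the skein relation and Figures 7--8, then invokes symmetry), compose these operations starting from the trivial tangle $0_3$ with vector $[0,0,1,0,0]^t$, and finish by counting the unknotted circles in the closures $\overline{0_j}$ exactly as you do ($1$, $2$, $2$, $2$, $3$ circles giving $1$, $k$, $k$, $k$, $k^2$). The only slip is notational: the $4$-plat result you model the induction on is Theorem 2.4, not Theorem 2.5.
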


\begin{proof}
Suppose that $K$ is a 3-bridge link. Then, we have a link $K'$ which is isotopic to $K$ and the projection onto the $xy$-plane has a  plat presentation $p_6(w)$. Then we define $q_6(w)$ that is the  plat presentation of the tangle $T=K'\cap B_2$ as in the first bottom diagram of Figure 6.\\

Suppose that $<T>=f^T_1(a)\mathcal{A}+f^T_2(a)\mathcal{B}+f^T_3(a)\mathcal{C}+f^T_4(a)\mathcal{D}+f^T_5(a)\mathcal{E}$ for some polynomials $f^T_i(a)$.\\

Let $T_{\sigma_j^{\pm 1}}$ be the new rational 3-tangle in $B_2$ which is obtained from $T$ by adding $\sigma_j^{\pm 1}$ for $1\leq j\leq 5$ as in the bottom diagrams of Figure 6.\\

Suppose that $<T_{{\sigma_j^{\pm 1}}}>=f^{T_{{\sigma_j^{\pm 1}}}}_1(a)\mathcal{A}+f^{T_{{\sigma_j^{\pm 1}}}}_2(a)\mathcal{B}+f^{T_{{\sigma_j^{\pm 1}}}}_3(a)\mathcal{C}+f^{T_{{\sigma_j^{\pm 1}}}}_4(a)\mathcal{D}+f^{T_{{\sigma_j^{\pm 1}}}}_5(a)\mathcal{E}$ for some polynomials $f^{T_{{\sigma_j^{\pm 1}}}}_i(a)$.\\

For convenience, let $f_i(a)=f_i^T(a)$ and $f'_i(a)=f^{T_{{\sigma_j^{\pm 1}}}}_i(a)$.\\

Then,  we get $<T_{\sigma_1}>=a<T>+a^{-1}<T'>$. We note that $<T'>=f_1(a)\mathcal{E}+f_2(a)\mathcal{C}+kf_3(a)\mathcal{C}+f_4(a)\mathcal{C}+kf_5(a)\mathcal{E}$ as in Figure 7, where $k=-(a^2+a^{-2}).$\\

\begin{figure}[htb]
\includegraphics[scale=.28]{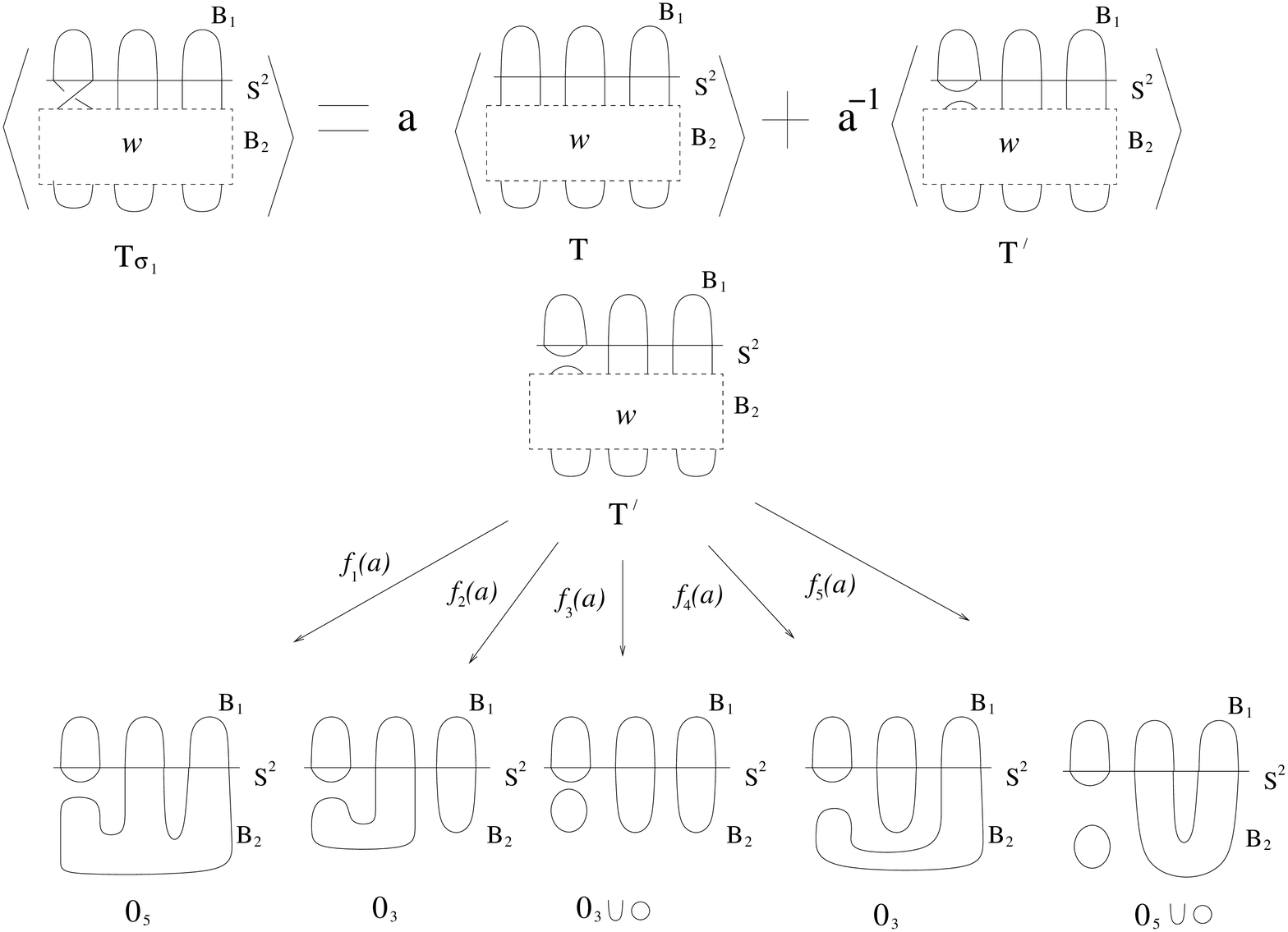}
\caption{}
\end{figure}

Therefore, $
<T_{\sigma_1}>=a<T>+a^{-1}<T'>=
 a(f_1(a)\mathcal{A}+f_2(a)\mathcal{B}+f_3(a)\mathcal{C}+f_4(a)\mathcal{D}+f_5(a)\mathcal{E})+a^{-1}(f_1(a)\mathcal{E}+f_2(a)\mathcal{C}+kf_3(a)\mathcal{C}+f_4(a)\mathcal{C}+kf_5(a)\mathcal{E})=
  af_1(a)\mathcal{A}+af_2(a)\mathcal{B}+(af_3(a)+a^{-1}(f_2(a)+kf_3(a)+f_4(a)))\mathcal{C}+af_4(a)\mathcal{D}+(af_5(a)+a^{-1}(f_1(a)+kf_5(a)))\mathcal{E}.$\\
  
  So, we have $f'_1(a)=af_1(a)$, $f'_2(a)=af_2(a)$, $f'_3(a)=af_3(a)+a^{-1}(f_2(a)+kf_3(a)+f_4(a))$, $f'_4(a)=af_4(a)$ and $f'_5(a)=af_5(a)+a^{-1}(f_1(a)+kf_5(a))$.

\begin{figure}[htb]
\includegraphics[scale=.28]{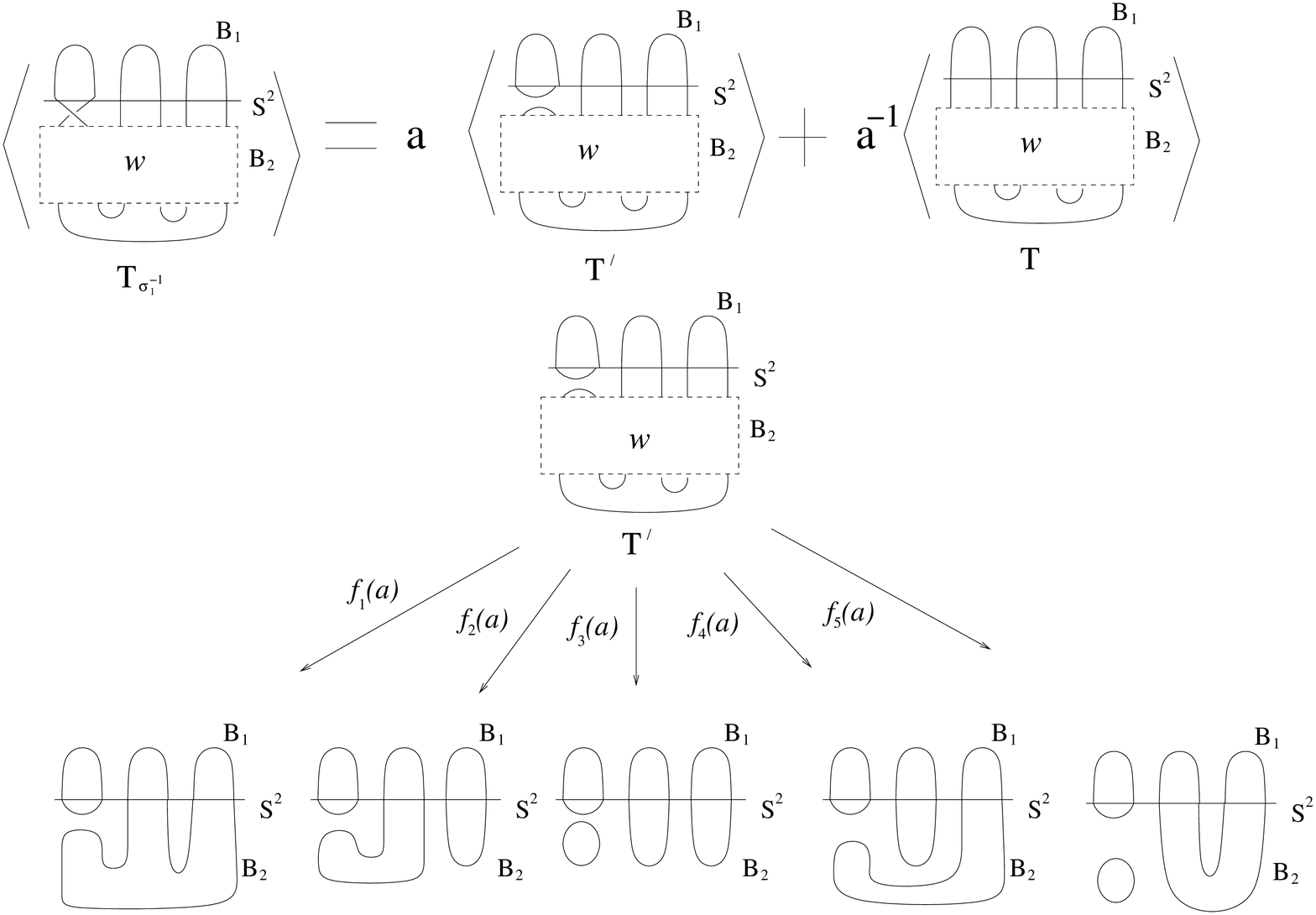}
\caption{}
\end{figure}

Similarly, By Figure 8, we have $<T_{\sigma_1^{-1}}>=a<T'>+a^{-1}<T>$.\\

Therefore, $<T_{\sigma_1^{-1}}>=a<T'>+a^{-1}<T>=
 a(f_1(a)\mathcal{E}+f_2(a)\mathcal{C}+kf_3(a)\mathcal{C}+f_4(a)\mathcal{C}+kf_5(a)\mathcal{E})+a^{-1}(f_1(a)\mathcal{A}+f_2(a)\mathcal{B}+f_3(a)\mathcal{C}+f_4(a)\mathcal{D}+f_5(a)\mathcal{E})=
  a^{-1}f_1(a)\mathcal{A}+a^{-1}f_2(a)\mathcal{B}+(a^{-1}f_3(a)+a(f_2(a)+kf_3(a)+f_4(a)))\mathcal{C}+a^{-1}f_4(a)\mathcal{D}+(a^{-1}f_5(a)+a(f_1(a)+kf_5(a)))\mathcal{E}.$\\

So, we have $f'_1(a)=a^{-1}f_1(a)$, $f'_2(a)=a^{-1}f_2(a)$, $f'_3(a)=a^{-1}f_3(a)+a(f_2(a)+kf_3(a)+f_4(a))$, $f'_4(a)=a^{-1}f_4(a)$ and $f'_5(a)=a^{-1}f_5(a)+a(f_1(a)+kf_5(a))$.\\

This operations can be expressed by the following.
\vskip 10pt

$ \left[ \begin{array}{ccccc}
a^{\pm 1} &0& 0& 0& 0 \\
0 & a^{\pm 1} &0 &0 &0 \\
0& a^{\mp 1}& a^{\pm 1}+a^{\mp 1}k& a^{\mp 1}&0 \\
0& 0&0 &a^{\pm 1} &0\\
a^{\mp 1}& 0& 0& 0&a^{\pm 1}+a^{\mp 1}k\\
 \end{array} \right]$
$ \left[ \begin{array}{c}
f_1(a)\\ f_2(a)\\f_3(a)\\f_4(a)\\f_5(a)\\

 \end{array} \right]=\left[ \begin{array}{c}
f_1'(a) \\ f_2'(a) \\f_3'(a) \\f_4'(a) \\f_5'(a) \\
\end{array} \right]$\\

 Similarly, we have four more operations from $\sigma_2^{\pm 1}, \sigma_3^{\pm 1}$, $\sigma_4^{\pm 1}$ and $\sigma_5^{\pm 1}$ as follows.
 
 \vskip 10pt
 $ \left[ \begin{array}{ccccc}
a^{\pm 1} +a^{\mp 1}k&0& 0& a^{\mp 1}& a^{\mp 1} \\
0 & a^{\pm 1}+a^{\mp 1}k &a^{\mp 1} &0 &0 \\
0& 0& a^{\pm 1}& 0&0 \\
0& 0&0 &a^{\pm 1} &0\\
0& 0& 0& 0&a^{\pm 1}\\
 \end{array} \right]\left[ \begin{array}{c}
f_1(a) \\ f_2(a) \\f_3(a) \\f_4(a) \\f_5(a)\\

 \end{array} \right]= \left[ \begin{array}{c}
f_1'(a) \\ f_2'(a) \\f_3'(a) \\f_4'(a) \\f_5'(a) \\

 \end{array} \right]$
 \vskip 10pt

$ \left[ \begin{array}{ccccc}
a^{\pm 1}&0& 0& 0& 0 \\
0 & a^{\pm 1} &0 &0 &0 \\
0& a^{\mp 1}& a^{\mp 1}k+a^{\pm 1}& 0&a^{\mp 1} \\
a^{\mp 1}& 0&0 &a^{\mp 1}k+a^{\pm 1} &0\\
0& 0& 0& 0&a^{\pm 1}\\
 \end{array} \right] \left[ \begin{array}{c}
f_1(a) \\ f_2(a) \\f_3(a) \\f_4(a) \\f_5(a)\\

 \end{array} \right]= \left[ \begin{array}{c}
f_1'(a) \\ f_2'(a) \\f_3'(a) \\f_4'(a) \\f_5'(a) \\

 \end{array} \right]$
 \vskip 20pt

$ \left[ \begin{array}{ccccc}
a^{\pm 1} +a^{\mp 1}k& a^{\mp 1}& 0&a^{\mp 1}&0 \\
0 & a^{\pm 1} &0 &0 &0 \\
0& 0& a^{\pm 1}& 0&0 \\
0& 0&0 &a^{\pm 1} &0\\
0& 0& a^{\mp 1}& 0&a^{\pm 1}+a^{\mp 1}k\\
 \end{array} \right] \left[ \begin{array}{c}
f_1(a) \\ f_2(a) \\f_3(a) \\f_4(a)\\f_5(a)\\

 \end{array} \right]= \left[ \begin{array}{c}
f_1'(a) \\ f_2'(a) \\f_3'(a) \\f_4'(a) \\f_5'(a) \\

 \end{array} \right]$
 \vskip 20pt

$ \left[ \begin{array}{ccccc}
a^{\pm 1}&0& 0& 0& 0 \\
a^{\mp 1}& a^{\pm 1}+a^{\mp 1}k &0 &0 &0 \\
0& 0& a^{\pm 1}+a^{\mp 1}k& a^{\mp 1}&a^{\mp 1} \\
0& 0&0 &a^{\pm 1} &0\\
0& 0& 0& 0&a^{\pm 1}\\
 \end{array} \right] \left[ \begin{array}{c}
f_1(a) \\ f_2(a) \\f_3(a) \\f_4(a) \\f_5(a)\\

 \end{array} \right]= \left[ \begin{array}{c}
f_1'(a) \\f_2'(a) \\f_3'(a) \\f_4'(a)\\f_5'(a) \\

 \end{array} \right]$, where \hskip 10pt $k=-a^2-a^{-2}.$
 \vskip 20pt
 
 Recall that $T$ is expressed by $\sigma_{k_1}^{\epsilon_1}\sigma_{k_2}^{\epsilon_2}\cdot\cdot\cdot
 \sigma_{k_{n-1}}^{\epsilon_{n-1}}\sigma_{k_{n}}^{\epsilon_{n}}$ for some non-zero integers $\epsilon_i$ ($1\leq i\leq n$), where $k_i\in\{1,2,3,4,5\}$.\\
 
 Then, we know that the generators $\sigma_i$ corresond to $B_i$. \\
 
 So, given $B=B_{k_1}^{\epsilon_1}B_{k_2}^{\epsilon_2}\cdot\cdot\cdot B_{k_{n-1}}^{\epsilon_{n-1}}B_{k_{n}}^{\epsilon_{n}}$ , we have\\
 
$ <T>=f_1(a)<T_1>\mathcal{A}+f_2(a)\mathcal{B}+f_3(a)\mathcal{C}+f_4(a)\mathcal{D}+f_5(a)\mathcal{E}$ \\ 

for $[f_1(a)~ f_2(a)~f_3(a)~f_4(a)~f_5(a)]^t=B[0~0~1~0~0]^t$ since $<T_{0_3}>=0\cdot\mathcal{A}+0\cdot\mathcal{B}+1\cdot\mathcal{C}+0\cdot\mathcal{D}+0\cdot\mathcal{E}$.\\

 From $<T>$, we have $<K>=f_1(a)+k(f_2(a)+f_4(a)+f_5(a))+k^2f_3(a)$ since $\overline{T_{0_1}}$ is the unknot, $\overline{T_{0_i}}$ for $i\in\{2,4,5\} $ are disjoint union of two unknots and $\overline{T_{0_3}}$ is disjoint union of three unknots as in Figure 8.\\

 Therefore, $X_K=(-a^{-3})^{w(\overrightarrow{K})}(f_1(a)+k(f_2(a)+f_4(a)+f_5(a))+k^2f_3(a))$
 \end{proof}

We remark that the matrices $B_1^{\pm 1},B_2^{\pm 1},B_3^{\pm 1}$, $B_4^{\pm 1}$ and $B_5^{\pm 1}$ satisify the braid group relations.

\section{The Kauffman brackets of $2n$-plat presentation knots }

We define the bracket polynomial of the rational $n$-tangle $T$ as $<T>=f^T_1(a)<{0_1}>+f^T_2(a)<{0_2}>+\cdot\cdot\cdot+f^T_m<{0_m}>$, where 
$f^T_i(a)$ are Laurent polynomials  that are obtained by starting with $T$ and using the three axioms repeatedly until only the $m$ trivial tangles $<{0_j}>$ in the expression given for $T$ are left. \\

So, we remark that the number $m$ of trivial rational $n$-tangles ${0_i}$  needs to be calculated. \\

To do this, let $\psi(0)=1$. Then  we define the map $\psi:2\mathbb{Z}^+\rightarrow \mathbb{Z}^+$ so that $\psi(2k)=\sum_{i=1}^{k}\psi(2i-2)\cdot \psi(2k-2i)$. \\

\begin{lemma}
The number of trivial rational $n$-tangles is $\psi(2n)$.
\end{lemma}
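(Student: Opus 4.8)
The plan is to establish a bijection between trivial rational $n$-tangles and a combinatorial object counted by $\psi(2n)$, namely non-crossing perfect matchings of $2n$ points arranged on the boundary of a disk (equivalently, balanced strings of $n$ matched parentheses). First I would observe that a trivial rational $n$-tangle is, by definition, a collection of $n$ unknotted, unlinked arcs properly embedded in $B_2$ with their $2n$ endpoints being the $2n$ marked points on the boundary sphere $S^2$; up to isotopy rel boundary, such a tangle is completely determined by the way the $2n$ endpoints are paired together, and the ``unlinked and unknotted'' condition forces this pairing to be realizable by disjoint arcs in the disk, i.e.\ to be \emph{non-crossing}. So the count of trivial rational $n$-tangles equals the number of non-crossing perfect matchings on $2n$ points on a circle.

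The key step is then to show that this count satisfies the same recursion and initial condition as $\psi$. I would set $C_k$ to be the number of non-crossing perfect matchings of $2k$ points on the boundary, with $C_0=1$ (the empty matching), matching $\psi(0)=1$. For the recursion, fix the first point and consider which point it is matched to: since the matching is non-crossing, the arc from point $1$ to its partner (say point $2i$) splits the remaining points into two arcs of the disk, one enclosing $2i-2$ points and the other enclosing $2n-2i$ points, and these two regions must be matched independently and non-crossingly. This gives $C_n=\sum_{i=1}^{n} C_{i-1}\,C_{n-i}$, which upon reindexing is exactly $\psi(2n)=\sum_{i=1}^{k}\psi(2i-2)\psi(2k-2i)$ with $k=n$. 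By induction on $n$, $C_n=\psi(2n)$, so the number of trivial rational $n$-tangles is $\psi(2n)$. (One recognizes $\psi(2n)$ as the Catalan number $C_n$, which is the standard count of such matchings, but I would argue the recursion directly to stay self-contained.)

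The main obstacle I anticipate is the geometric justification that isotopy classes of trivial rational $n$-tangles correspond precisely to non-crossing matchings, rather than to all matchings. This requires care: one must argue that two systems of disjoint embedded arcs in $B_2$ realizing the \emph{same} non-crossing pairing are isotopic rel boundary (uniqueness), and that a pairing is realizable by disjoint unknotted unlinked arcs if and only if it is non-crossing when the $2n$ points lie on the $x$-axis as in the paper's setup (realizability). The uniqueness direction follows from the fact that properly embedded boundary-parallel arcs in a ball with fixed endpoints are isotopic, while the non-crossing constraint comes from the planarity of the projection of $S^2-\{\infty\}$ onto the $x$-axis established in the introduction. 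Once this identification is in place, the counting argument is a routine induction, and the bulk of the work lies entirely in this correspondence.
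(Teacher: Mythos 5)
Your proposal is correct and takes essentially the same approach as the paper: both arguments condition on which endpoint is paired with the first marked point, use the non-crossing (unlinked, planar) structure to split the remaining endpoints into two independent regions, and thereby obtain the Catalan-type recursion $\sum_{i=1}^{n}\psi(2i-2)\cdot\psi(2n-2i)=\psi(2n)$. The only difference is one of rigor, not of route: you spell out the bijection between isotopy classes of trivial rational $n$-tangles and non-crossing perfect matchings, a correspondence the paper's proof uses implicitly without justification.
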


\begin{proof}
Let ${0_i}$ be a trivial rational $n$-tangle. Then we note that the string with the endpoint $1$ has the other endpoint at $2k+1$ for some positive integer $k$.\\

If $k=1$ then we calculate the number of trivial $n$-tangles by considering $2n-2$ endpoints and it is $\phi(0)\cdot\psi(2n-2)$.\\

If $k=2$ then we have a nested string inside of the string with the endpoint $1$ and we need to calculate the possible case for the rest of strings.
Then it is $\psi(2)\cdot\psi(2n-4)$.\\

By considering the all subcases with respect to $k$, we calculate the number of trivial rational $n$-tangle which is $\sum_{i=1}^{n}\psi(2i-2)\cdot \psi(2n-2i)$.\\

Therefore, the number of trivial rational $n$-tangles is $\psi(2n)$.

\end{proof}

Recall $\overline{T}$ that is the tangle closure of the tangle $T$ to have the knot with the $2n$-plat presentation.\\

Now, we have a corollary to calculate the Kauffman polynomial of $n$-plat presentation as follows.\\

\begin{corollary}
There exist $(4n-2)$ $\psi(2n)\times\psi(2n)$ matrices to calculate the coefficents $f^T_1(a),...,f^T_{\psi(2n)}(a)$ of the Kauffman bracket for a rational $n$-tangle $T$ with a $2n$-plat presentation ${q_{2n}(w)}$.
Moreover, we calculate the Kauffman bracket of $\overline{q_{2n}(w)}$ and the Kauffman polynomial of $\overline{q_{2n}(w)}$ from this.
\end{corollary}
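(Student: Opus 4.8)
The plan is to mirror the structure of the proof of Theorem 3.1 exactly, but at the level of a general $2n$-tangle rather than a rational $3$-tangle. First I would fix an ordering of the $\psi(2n)$ trivial rational $n$-tangles $0_1,\dots,0_{\psi(2n)}$ (whose count is exactly $\psi(2n)$ by Lemma 4.1), and expand the bracket of any rational $n$-tangle $T$ as $<T>=\sum_{i=1}^{\psi(2n)} f_i^T(a)<0_i>$. A $2n$-plat presentation comes from a word $w=\sigma_{k_1}^{\epsilon_1}\cdots\sigma_{k_n}^{\epsilon_n}$ in $\mathbb{B}_{2n}$, so the generators range over $\sigma_1,\dots,\sigma_{2n-1}$, giving $2n-1$ generators and hence $2(2n-1)=4n-2$ signed generators $\sigma_j^{\pm1}$. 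This already accounts for the count of $4n-2$ matrices in the statement.

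The core step is to show that adjoining a single crossing $\sigma_j^{\pm1}$ to a tangle $T$ acts on the coefficient vector $[f_1^T,\dots,f_{\psi(2n)}^T]^t$ by a fixed $\psi(2n)\times\psi(2n)$ matrix $B_j^{\pm1}$ depending only on $j$ and the sign, not on $T$. To see this I would resolve the new crossing by the Kauffman skein axiom $<\sigma_j^{\pm1}T>=a^{\pm1}<T>+a^{\mp1}<T'>$, where $T'$ is the tangle obtained by the oriented smoothing at that crossing. The key observation (the analogue of the computations with $<T'>$ in Figures 7 and 8 of the $3$-tangle case) is that the smoothing turns each basis tangle $0_i$ into either another basis tangle $0_{\pi(i)}$ or into $0_{\pi(i)}$ together with a free loop, the latter contributing a factor $k=-a^2-a^{-2}$. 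Thus each column of the transition matrix for $\sigma_j^{\pm1}$ is determined by tracking how the $2n$ endpoints are repaired by the smoothing; the entries are Laurent monomials in $a$ times $1$ or $k$. Collecting these over all basis tangles yields the desired matrix $B_j^{\pm1}$, exactly as the five $5\times5$ matrices were produced from the five basis tangles when $n=3$. Composing along the word gives $B=B_{k_1}^{\epsilon_1}\cdots B_{k_n}^{\epsilon_n}$ and the coefficient vector as the appropriate column of $B$ applied to the initial basis vector of the starting trivial tangle.

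Finally I would handle the closure. Taking the plat closure $\overline{q_{2n}(w)}$ sends each basis tangle $0_i$ to a disjoint union of some number $c_i$ of unknotted circles, so $<\overline{0_i}>=k^{\,c_i-1}$ by the Kauffman axioms. Hence $<\overline{q_{2n}(w)}>=\sum_i k^{\,c_i-1}f_i^T(a)$, a fixed linear functional of the coefficient vector, and $X_{\overline{q_{2n}(w)}}=(-a^{-3})^{w(\overrightarrow{K})}<\overline{q_{2n}(w)}>$ with the writhe supplied by the method of Section 5. This reproduces, for $n=3$, the closing formula $<K>=f_1+k(f_2+f_4+f_5)+k^2 f_3$ with the counts $c_1=1$, $c_2=c_4=c_5=2$, $c_3=3$.

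The main obstacle I anticipate is making the combinatorial bookkeeping of the basis transition precise and uniform: one must verify that for every generator $\sigma_j^{\pm1}$ and every trivial $n$-tangle $0_i$, the single smoothing $T'$ is again a (possibly circle-decorated) trivial $n$-tangle lying in the chosen basis, so that the action genuinely closes up into a well-defined $\psi(2n)\times\psi(2n)$ matrix with Laurent-polynomial entries. For $n=3$ this was checked by hand through Figures 6--8, but for general $n$ it requires an argument that the collection of trivial rational $n$-tangles is closed under the operation ``cap off the two strands met by $\sigma_j$ and reconnect,'' together with a consistent convention for enumerating them; I would argue this by the same recursive/nesting structure on the $2n$ boundary points that underlies the recursion defining $\psi$ in Lemma 4.1.
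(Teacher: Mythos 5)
Your proposal is correct and takes exactly the approach the paper intends: the paper's entire proof of this corollary is the single sentence ``This is the generalization of Theorem 2.1'' (evidently meaning Theorem 3.1), so your skein-relation/transition-matrix argument is the same route, only spelled out in far greater detail. In particular, the point you flag as the main obstacle --- that the set of $\psi(2n)$ trivial tangles is closed (up to a free loop contributing $k=-a^2-a^{-2}$) under the cap-and-reconnect smoothing, so that each $\sigma_j^{\pm 1}$ really does act by a fixed $\psi(2n)\times\psi(2n)$ matrix --- is precisely the detail the paper leaves entirely implicit.
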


\begin{proof}

This is the generalization of Theorem 2.1.
\end{proof}

\section{A way to calculate the writhe of a $n$-bridge knot ($n\geq 2$)}

First, assume that the projection onto the $xy$-plane of a $n$-bridge knot $K$ has a  plat presentaion $p_{2n}(w)$ with $w=\sigma_{k_1}^{\epsilon_1}\sigma_{k_2}^{\epsilon_2}\cdot\cdot\cdot
\sigma_{k_{m-1}}^{\epsilon_{m-1}}\sigma_{k_{m}}^{\epsilon_{m}}$ for some non-zero integers $\epsilon_i$ ($1\leq i\leq m$), where $k_i\in\{1,2,\cdot\cdot
\cdot, 2n-1\}$ and $k_j\neq k_{j+1}$ for $1\leq j\leq m-1$.\\

Then we have the  plat presentation $q_{2n}(w)$ of the tangle $T=K\cap B_2$ so that $\overline{q_{2n}(w)}=p_{2n}(w)$.\\ 

Let  $\mathcal{P}(\sigma_i^{\pm 1})$ be the $2n\times 2n$ matrix which is obtained by interchanging the $i$th and $i+1$st rows of $I$.
Then $\mathcal{P}$ extends to a homomorphism from $\mathbb{B}_{2n}$ to $GL_6(\mathbb{Z})$.\\

For an element $w$ of $\mathbb{B}_{2n}$, let 1,2,...,$2n$ be the upper endpoints of the $2n$ strings numbered from the left. Then let $\delta_i$ be the trivial arc components of $p_{2n}(w)\cap p(B_1)$  so that $\partial \delta_i=\{2i-1,2i\}$. Also, let $\gamma_j$ be the trivial arcs which connect pairs of consecutive strings of the braid at the bottom endpoints numbered from the left.
 Let $u=[1,2,\cdot\cdot
\cdot,2n]$. Then we assign the same number to the bottom endpoint of the $2n$ strings.  Then we say that the new ordered sequence of numbers $w(u)$ is the $\emph{permutation  induced by $w$}$.

\begin{lemma}
$w(u)=[1,2,\cdot\cdot\cdot,2n]\mathcal{P}(w)$.
\end{lemma}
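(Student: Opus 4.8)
## Proof Proposal for Lemma 5.1

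The plan is to establish that the permutation induced by $w$ on the bottom endpoints is computed by the matrix product $\mathcal{P}(w)$ acting on the row vector $u = [1, 2, \dots, 2n]$. The key observation is that $\mathcal{P}$ is defined to be a homomorphism from $\mathbb{B}_{2n}$ into the group of permutation matrices, so the statement reduces to checking that the \emph{geometric} action of a braid on endpoint labels agrees with the \emph{algebraic} action of the associated permutation matrices, and then extending from single generators to arbitrary words by induction on word length.

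First I would verify the base case of a single generator. For $w = \sigma_i^{\pm 1}$, the braid crosses the $i$th and $(i+1)$st strands, so tracing each strand from top to bottom sends the label originally at position $i$ to position $i+1$ and vice versa, while all other labels are fixed. On the matrix side, $\mathcal{P}(\sigma_i^{\pm 1})$ is by definition the identity with its $i$th and $(i+1)$st rows swapped, and right-multiplying the row vector $u$ by this matrix transposes exactly the $i$th and $(i+1)$st entries of $u$. Thus $w(u) = u\,\mathcal{P}(\sigma_i^{\pm 1})$ for each generator, which settles the base case. Note that $\mathcal{P}(\sigma_i)$ and $\mathcal{P}(\sigma_i^{-1})$ coincide because a transposition is its own inverse; this is consistent with the fact that a permutation records only which strands cross, not the sign of the crossing.

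Next I would carry out the inductive step. Writing $w = v\,\sigma_{k_m}^{\epsilon_m}$ where $v = \sigma_{k_1}^{\epsilon_1}\cdots\sigma_{k_{m-1}}^{\epsilon_{m-1}}$, I would argue that applying the braid $w$ to the top labels is the same as first applying $v$ and then applying the final generator to the resulting configuration. Geometrically, composing braids corresponds to stacking diagrams, so the permutation induced by the full word is the composite of the permutation induced by $v$ followed by the transposition from $\sigma_{k_m}^{\epsilon_m}$. By the inductive hypothesis $v(u) = u\,\mathcal{P}(v)$, and applying the final generator multiplies on the right by $\mathcal{P}(\sigma_{k_m}^{\epsilon_m})$; since $\mathcal{P}$ is a homomorphism, $\mathcal{P}(v)\,\mathcal{P}(\sigma_{k_m}^{\epsilon_m}) = \mathcal{P}(w)$, giving $w(u) = u\,\mathcal{P}(w)$.

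The main obstacle is getting the \emph{order of composition} to match the side of multiplication. Because $\mathcal{P}$ is a left homomorphism ($\mathcal{P}(vw) = \mathcal{P}(v)\mathcal{P}(w)$) but the permutations act on row vectors by \emph{right} multiplication, one must be careful that reading the braid top-to-bottom corresponds to left-to-right multiplication of the generator matrices, and that right action of a product of permutation matrices composes in the correct order. I would resolve this by explicitly tracking a single label through the stacked diagram and confirming that the convention for $w(u)$ (bottom endpoint labels read left to right) forces right multiplication, so that the associativity of matrix multiplication delivers the result without sign or ordering ambiguity.
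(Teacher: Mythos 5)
Your proposal is correct and follows the same route as the paper, whose proof consists of the single line that the claim ``is proven by induction'' on the crossing count; your base case for a single generator, inductive step peeling off the last syllable via the homomorphism property, and attention to the right-action/composition-order convention simply supply the details that the paper leaves implicit.
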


\begin{proof}
This is proven by induction on $l=|\epsilon_1|+|\epsilon_2|+\cdot\cdot\cdot+|\epsilon_m|$.

 \end{proof}
 
Let $R$ be the $2n\times 2n$ matrix which is obtained by interchanging  the $2i+1$st and $2i+2$nd rows of $I$ for all $i$ such that $0\leq i\leq n-1$.\\
 
 Recall that $w^r$ is the reverse word of $w$.\\
 
Let $[o_j(1),o_j(2),\cdot\cdot\cdot,o_j(2n) ]=[1,2,\cdot\cdot\cdot,2n](\mathcal{P}(w)R\mathcal{P}(w^{r})R)^{j-1}\mathcal{P}(w)R\mathcal{P}(w^{r})$ for $1\leq j\leq n$.\\

 Also, let $[o_j'(1),o_j'(2),\cdot\cdot\cdot,o_j'(2n) ]=[1,2,\cdot\cdot\cdot,2n](\mathcal{P}(w)R\mathcal{P}(w^{r})R)^j$ for $0\leq j\leq n$.\\

  We note that
$[o_0'(1),o_0'(2),\cdot\cdot\cdot,o_0'(2n) ]=u$.\\

 Consider the case that $j=1$. In order to get $[1,2,\cdot\cdot\cdot,2n]\mathcal{P}(w)$,  we follow the strings of the braid down while preserving the numbers of the strings. Then  $[1,2,\cdot\cdot\cdot,2n]\mathcal{P}(w)R$ is obtained by following along the trivial arcs $\gamma_i$  while preserving the numbers of the strings.
Then we follow the strings of the braid up while preserving the numbers of the strings to get $[1,2,\cdot\cdot\cdot,2n]\mathcal{P}(w)R\mathcal{P}(w^r)$ which is $[o_1(1),o_1(2),\cdot\cdot\cdot,o_1(2n) ]$. After this, we follow along the trivial arcs $\delta_i$  while preserving the numbers of the strings to get $[1,2,\cdot\cdot\cdot,2n]\mathcal{P}(w)R\mathcal{P}(w^r)R$ which is
$[o_1'(1),o_1'(2),\cdot\cdot\cdot,o_1'(2n) ]$.\\

Generally speaking, from the $[o_j'(1),o_j'(2),\cdot\cdot\cdot,o_j'(2n) ]$ we follow the strings of the braid  down and follow along the trivial arcs $\gamma_i$  and follow the strings of the braid up to get $[o_{j+1}(1),o_{j+1}(2),\cdot\cdot\cdot,o_{j+1}(2n) ]$ while preserving the numbers of the strings.
Then, we get $[o'_{j+1}(1),o'_{j+1}(2),\cdot\cdot\cdot,o'_{j+1}(2n) ]$ by following along the trivial arcs $\delta_i$  while preserving the numbers of the strings.\\

We note that $o'_0(i)=i, o'_1(i),\cdot\cdot\cdot o'_{n-1}(i)$ are distinct points. Otherwise, $K$ is a link, not a knot. Also, we know that $o'_j(i)=o'_{n+j}(i)$.\\

Similarly, $o_1(i), o_2(i),\cdot\cdot\cdot o_{n}(i)$ are distinct points and $o_j(i)=o_{n+j}(i)$.\\

Also, we note that for a trivial arc $\delta_k$ there exists a unique $i$ ($1\leq i\leq n$) so that either  $o_i(1)=2k-1$ or $o_i(1)=2k$. \\

Without loss of generality, give the clockwise orientation  to the trivial arc $\delta_1$ in $B_1$ with $\partial\delta_1=\{1,2\}$ from $1$ to $2$ along $\delta_1$. So,  the initial point of $\delta_1$ is 1 and the terminal point of $\delta_1$ is 2 for the given orientation. Then, the orientations of the other trivial arcs $\delta_2,\cdot\cdot\cdot,\delta_n$ in $B_1$ are determined by the orientation of the knot $K$ which is induced by $\delta_1$.\\

 \begin{lemma}
  The trivial arc $\delta_k$  has the same clockwise orientation  as $\delta_1$ if  $k=o_i(1)/2$ for some $i$ ($1\leq i\leq n$).  The trivial arc $\delta_k$ has the opposite orientation (counterclockwise) as $\delta_1$ if  $k=(o_i(1)+1)/2$ for some $i$ ($1\leq i\leq n$).
\end{lemma}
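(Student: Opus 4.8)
The plan is to track how the orientation propagates around the knot as we follow the single strand through the successive regions of the diagram. The key observation is that the sequence $o_1(1), o_2(1), \ldots, o_n(1)$ records exactly the endpoints in $B_1$ that the strand visits as we traverse the entire knot, and each such endpoint lies on some $\delta_k$ via the boundary condition $\partial\delta_k = \{2k-1, 2k\}$. Since $\delta_1$ is oriented clockwise from its odd endpoint $1$ to its even endpoint $2$, the whole question reduces to: when the strand enters an arc $\delta_k$, does it enter at the odd endpoint $2k-1$ (in which case, matching the convention that ``clockwise'' traverses odd-to-even, $\delta_k$ inherits the clockwise orientation) or at the even endpoint $2k$ (in which case it is traversed even-to-odd, i.e.\ counterclockwise)? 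First I would make precise that $o_i(1) = 2k$ exactly encodes ``the strand reaches the even endpoint of $\delta_k$'' while $o_i(1) = 2k-1$ encodes ``the strand reaches the odd endpoint,'' and that these two cases are equivalent to $k = o_i(1)/2$ and $k = (o_i(1)+1)/2$ respectively.

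Next I would set up the induction on $i$ that carries the orientation from $\delta_1$ outward. The base case is $i=1$: by construction $o_1(1)$ is obtained by following the braid down, across the bottom arcs $\gamma$, and back up, so it is the second endpoint the strand hits in $B_1$ after leaving $\delta_1$ at endpoint $2$. Because the knot orientation is induced by starting clockwise on $\delta_1$, the strand departs $\delta_1$ at its terminal point $2$, traverses $B_2$, and re-enters $B_1$ at the endpoint recorded by $o_1(1)$; there it runs along some $\delta_k$ before leaving again. I would argue that the parity of $o_1(1)$ determines which endpoint of $\delta_k$ is the entry point, hence the orientation of $\delta_k$, matching the stated dichotomy. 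For the inductive step, I would use the relation $[o_{j+1}(1), \ldots] = [o_j'(1), \ldots]\,\mathcal{P}(w) R\, \mathcal{P}(w^r)$ together with Lemma 5.2 to show that the strand's next visit to $B_1$ is recorded by $o_{i+1}(1)$, and that the orientation is consistently transported: traversing $B_2$ reverses nothing about the odd/even bookkeeping because $R$ pairs each endpoint with its $\delta$-partner of opposite parity, so the entry-parity at step $i+1$ is again read directly off $o_{i+1}(1)$.

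The main obstacle I expect is the careful bookkeeping of how the permutation matrices $\mathcal{P}(w)$, $R$, and $\mathcal{P}(w^r)$ interact with the orientation, rather than merely with the point-labels. Lemma 5.2 tells us where each labeled point goes, but it says nothing about direction of travel; I must augment that combinatorial data with a sign or head/tail marker and verify that following the braid up (via $\mathcal{P}(w^r)$) versus down (via $\mathcal{P}(w)$) is orientation-reversing in the right way so that the net effect over one full pass $\mathcal{P}(w)R\mathcal{P}(w^r)R$ lands the strand at the next $\delta$-arc with the correctly inherited orientation. Concretely, the delicate point is that entering $\delta_k$ at $2k-1$ versus $2k$ flips the local orientation, and I must confirm that the arc $R$ (which connects $2i-1$ to $2i$ at the top) together with the reversed braid word preserves the global consistency of the induced orientation of $K$.

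Once the parity-tracking is established, the conclusion is immediate: the strand visits $\delta_k$ precisely at the step $i$ with either $o_i(1)=2k$ or $o_i(1)=2k-1$, and the orientation of $\delta_k$ agrees with $\delta_1$ (clockwise) in the former case and is opposite (counterclockwise) in the latter, which is exactly the statement. I would close by noting that distinctness of $o_1(1), \ldots, o_n(1)$ (guaranteed earlier, since otherwise $K$ is a link) ensures each $\delta_k$ receives a well-defined orientation from a unique step $i$.
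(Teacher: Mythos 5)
There is a genuine gap, and it sits at the crux of the lemma: you have misidentified which endpoint of $\delta_k$ the number $o_i(1)$ records, and with your own conventions this reverses the dichotomy. By definition $[o_1(1),\ldots,o_1(2n)]=[1,2,\ldots,2n]\mathcal{P}(w)R\mathcal{P}(w^r)$, which applies $\mathcal{P}(w)$ (follow the braid \emph{down}) to the labels first; hence $o_1(1)$ is the top endpoint joined to endpoint $1$ through $B_2$. Since the orientation of $\delta_1$ points \emph{up} at endpoint $1$ (the knot comes up out of $B_2$ into $1$ and runs along $\delta_1$ to $2$), the strand of $B_2$ ending at $1$ is oriented toward $1$, so at $o_1(1)$ the knot is \emph{leaving} $B_1$; inductively, every $o_i(1)$ is the terminal (exit) point of its arc, never the entry point, because the sequence $1,o_1(1),o_1'(1),o_2(1),\ldots$ traverses $K$ \emph{against} its orientation. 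Your base case asserts instead that $o_1(1)$ is reached by departing $\delta_1$ at $2$ and traversing $B_2$ along the orientation (at one point you call it the re-entry point, at another ``the second endpoint the strand hits''). That identification is false in general: if $\tau(j)$ denotes the endpoint joined to $j$ through $B_2$, then $o_1(1)=\tau(1)$, whereas your two descriptions give $\tau(2)$ and $R(\tau(2))$. For example, for a $6$-plat whose tangle realizes the pairing $\tau=(1\,4)(2\,5)(3\,6)$ (a knot), one has $o_1(1)=4$, while the first endpoint hit after leaving $2$ is $5$ and the second is $6$.

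This is not a cosmetic slip, because your paragraph-one reduction is stated for \emph{entry} points: ``enter at the odd endpoint $2k-1$ $\Rightarrow$ clockwise, enter at the even endpoint $2k$ $\Rightarrow$ counterclockwise,'' together with ``$o_i(1)=2k$ encodes reaching the even endpoint.'' Carried out faithfully, this yields $k=o_i(1)/2$ $\Rightarrow$ $\delta_k$ counterclockwise, which is the opposite of the lemma (and of your own final paragraph, which simply asserts the correct dichotomy without deriving it from your setup). The paper's short proof works precisely because $o_i(1)$ is an exit point: the orientation of $\delta_k$ runs \emph{toward} $o_i(1)$, so $o_i(1)=2k$ gives the orientation $2k-1\to 2k$ (odd to even, as on $\delta_1$), while $o_i(1)=2k-1$ gives $2k\to 2k-1$. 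To repair your argument, keep your (correct) entry-point convention but add the translation ``enters at odd $\Leftrightarrow$ exits at even,'' and prove that each $o_i(1)$ is an exit point by the backwards-traversal observation above; then the induction you sketch does go through.
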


\begin{proof}
If $k=o_i(1)/2$ for some $i$  then the endpoints of $\delta_k$ are $o_i(1)-1$ and $o_i(1)$. Also,  the orientation of $\delta_k$ is from $o_i(1)-1$ to $o_i(1)$. Therefore, the $\delta_k$  has the same orientaion as $\delta_1$.\\

If $k=(o_i(1)+1)/2$ for some $i$  then the endpoints of $\delta_k$ are $o_i(1)$ and $o_i(1)+1$. Also,  the orientation of $\delta_k$ is from $o_i(1)+1$ to $o_i(1)$. Therefore, the $\delta_k$  has the opposite orientation as $\delta_1$.
\end{proof}

Recall the ordered sequence of numbers $u=[1,2,\cdot\cdot\cdot,2n]$. Now, we define a new sequence of numbers $r=[r(1),r(2),\cdot\cdot\cdot,r(2n)]$ as follows.
For the  orientation given above, we replace the original numbers of $u=[1,2,...,2n]$ for the initial points of $\delta_i$  by 1 and we replace the original numbers for the terminal points of $\delta_i$  by 2.\\

Now, let  $r_0=r$.\\

 Let $r_i=[r_i(1),r_i(2),r_i(3),\cdot\cdot\cdot,r_i(2n)]=
 r \mathcal{P}(\sigma_{k_1}^{\epsilon_1}\sigma_{k_2}^{\epsilon_2}\cdot\cdot\cdot \sigma_{k_{i-1}}^{\epsilon_{i-1}}\sigma_{k_{i}}^{\epsilon_i})$ for $1\leq i\leq m$.\\

 Let $v(i)=\left\{\begin{array}{cl}
 0 & $if~$ r_{i-1}(k_i)=r_{i-1}(k_{i}+1) \\
1 &  $if~$ r_{i-1}(k_i)\neq r_{i-1}(k_{i}+1)\\
\end{array}\right.$

 Then we calculate the writhe of $K$ as follows.\\
 
 \begin{theorem}
$w(K)=\sum_{i=1}^m(-1)^{v(i)}\epsilon_i$.
 \end{theorem}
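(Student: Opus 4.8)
\section*{Proof proposal}

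The plan is to compute $w(K)$ directly as the sum of the signs of all crossings, organized according to the blocks $\sigma_{k_i}^{\epsilon_i}$ of $w$, and to show that each block contributes exactly $(-1)^{v(i)}\epsilon_i$. The essential geometric fact is that the two physical strands participating in the block $\sigma_{k_i}^{\epsilon_i}$ each carry a fixed global orientation (upward or downward) throughout the $|\epsilon_i|$ successive twists of that block; since neither strand reverses orientation between consecutive twists, all $|\epsilon_i|$ crossings of the block share a common sign, and the block's contribution to $w(K)$ is $|\epsilon_i|$ times that common sign.

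First I would pin down this common sign by a reference computation. Orienting both strands downward (the standard coherent orientation of a braid), the sign convention defining the writhe makes each crossing of $\sigma_{k_i}$ count $+1$ and each crossing of $\sigma_{k_i}^{-1}$ count $-1$, so the coherently oriented block contributes $\epsilon_i$. Reversing the orientation of exactly one of the two strands flips the sign of every crossing between them, while reversing both leaves all signs unchanged. Hence the block contributes $+\epsilon_i$ when its two strands run in the same direction and $-\epsilon_i$ when they run oppositely.

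Next I would identify the vector $r_{i-1}$ as the record of these orientations, with the label $1$ meaning \emph{oriented upward} and the label $2$ meaning \emph{oriented downward}. At the top, the prescribed clockwise orientation of $\delta_1$ makes the strand at its initial endpoint $1$ point upward (the knot ascends to the arc there) and the strand at its terminal endpoint $2$ point downward (the knot descends from the arc into the braid); traversing the knot, every top arc is reached at its initial endpoint along an upward strand and left at its terminal endpoint along a downward strand, so by Lemma 5.2 every initial endpoint carries an upward strand and every terminal endpoint a downward one, which is exactly the rule defining $r$. Because a strand preserves its orientation as it is carried down through the braid and $\mathcal{P}$ tracks the resulting permutation of positions (Lemma 5.1), the entry of $r_{i-1}=r\,\mathcal{P}(\sigma_{k_1}^{\epsilon_1}\cdots\sigma_{k_{i-1}}^{\epsilon_{i-1}})$ in position $j$ equals the orientation of the strand occupying position $j$ immediately before the $i$-th block. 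Therefore $r_{i-1}(k_i)=r_{i-1}(k_i+1)$ holds precisely when the two crossing strands share an orientation, i.e. precisely when $v(i)=0$.

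Combining the two steps, block $i$ contributes $+\epsilon_i$ when $v(i)=0$ and $-\epsilon_i$ when $v(i)=1$, that is $(-1)^{v(i)}\epsilon_i$ in either case, and summing over $i=1,\dots,m$ gives $w(K)=\sum_{i=1}^m(-1)^{v(i)}\epsilon_i$. I expect the main obstacle to be the careful verification in the third step that the right action $r_{i-1}=r\,\mathcal{P}(\cdots)$ assigns the correct physical orientation to each position at each level, consistent with the permutation convention fixed in Lemma 5.1; this is best handled by a short induction on $i$ paralleling the proof of that lemma. A secondary point needing care is the normalization in the reference computation, which must be fixed so that a coherently oriented block counts $+\epsilon_i$ rather than $-\epsilon_i$, since this is what makes the exponent $v(i)$ (rather than $v(i)+1$) correct.
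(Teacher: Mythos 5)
Your proposal is correct and follows essentially the same route as the paper's proof: both decompose the writhe into the blocks $\sigma_{k_i}^{\epsilon_i}$, interpret the entries of $r_{i-1}$ (propagated by the permutation representation $\mathcal{P}$) as the up/down orientations of the two strands entering the $i$-th block, and conclude that the block contributes $\epsilon_i$ when the orientations agree ($v(i)=0$) and $-\epsilon_i$ when they disagree ($v(i)=1$). Your treatment is somewhat more careful than the paper's on two points it merely asserts --- the sign normalization for coherently oriented crossings and the verification that the right action $r\,\mathcal{P}(\cdots)$ tracks physical orientations correctly --- but the underlying argument is the same.
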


 \begin{proof}
 For the $2n$ strings of the braid $w$, we assign the number $r(k)$ to each string with the upper endpoint $k$ for $1\leq k\leq 2n$.\\
 
  Without loss of generality, we give the orientation (clockwise) to  $\delta_1$ from $1$ to $2$ along $\delta_1$. Then the orientation at 1 is up and the orientation at 2 is down.
 Then we know that the orientation at $j$ is up if $r(j)=1$ and it is down if $r(j)=2$.\\

Fix a value $i$ ($1\leq i\leq m$). \\
 
Case 1: Suppose that $r_{i-1}(k_i)=r_{i-1}(k_{i}+1)$.\\
 
 Then the two strings for the ($\sum_{j=1}^{i-1}|\epsilon_j|+1)$-th crossing have the same orientation since $r_{i-1}(k_i)=r_{i-1}(k_{i}+1)$, i.e., the numbers of the two strings for ($\sum_{j=1}^{i-1}|\epsilon_j|+1)$-th crossing are the same. If  $r_{i-1}(k_i)=r_{i-1}(k_{i}+1)=1$  then  the orientations are up and if  $r_{i-1}(k_i)=r_{i-1}(k_{i}+1)=2$  then  the orientations are down.\\
 
   Then we note that the index $e$ of ($\sum_{j=1}^{i-1}|\epsilon_j|+1)$-th crossing is $+1$ if the crossing is positive and the index $e$ of ($\sum_{j=1}^{i-1}|\epsilon_j|+1)$-th crossing is $-1$ if the crossing is negative.\\
 
We note that all the crossings in $\sigma_{k_i}^{\epsilon_i}$ have the same index. \\
 
 Therefore, the contribution of $\sigma_{k_i}^{\epsilon_i}$ to the writhe is $\epsilon_i$\\
 
Since $v(i)=0$, we check that $(-1)^{v(i)}\epsilon_i=(-1)^0\epsilon_i=\epsilon_i$ is the contribution of $\sigma_{k_i}^{\epsilon_i}$ to the writhe.\\
 
Case 2: Suppose that $r_{i-1}(k_i)\neq r_{i-1}(k_{i}+1)$ .\\

Then we know that the orientations of the two strings for ($\sum_{j=1}^{i-1}|\epsilon_j|+1)$-th crossing are either up and down or down and up since $r_{i-1}(k_i)\neq r_{i-1}(k_{i}+1)$.\\

So, we check that the index $e$ of ($\sum_{j=1}^{i-1}|\epsilon_j|+1)$-th crossing is $-1$ if the crossing is positive and the index $e$ of ($\sum_{j=1}^{i-1}|\epsilon_j|+1)$-th crossing  is $+1$ if the crossing is negative.\\

Therefore, the contribution of $\sigma_{k_i}^{\epsilon_i}$ to the writhe is $-\epsilon_i$\\

Since $v(i)=1$, we  check that $(-1)^{v(i)}\epsilon_i=(-1)\epsilon_i=-\epsilon_i$ is the contribution of $\sigma_{k_i}^{\epsilon_i}$ to the writhe.\\

By adding all the indices of  $\sigma_{k_i}^{\epsilon_i}$, we have the given formula for the writhe.
\end{proof}

\vskip 30pt
Department of Mathematics, Oklahoma State University, 401 Mathematical Sciences, Stillwater, OK 74078, USA

\end{document}